\newtheorem{prelem}{{\bf Proposition}}
\newenvironment{lem}{\begin{prelem}{\hspace{-0.5em}{\bf}}}{\end{prelem}}
 \newtheorem{theorem}{Theorem}
\newtheorem{corollary}[theorem]{Corollary}
\newtheorem{lemma}[theorem]{Lemma}
\newtheorem{observation}[theorem]{Observation}
\theoremstyle{definition}
\theoremstyle{remark}
\title{On the Roman bondage number of a graph\thanks {The work was supported by NNSF
of China (No. 11071233).}}
\author {A. Bahremandpour$^a$, Fu-Tao Hu$^b$, S.M. Sheikholeslami$^a$, Jun-Ming Xu$^c$\thanks {Corresponding author:
xujm@ustc.edu.cn (J.-M. Xu)}\vspace{2mm}\\ \\
$^a$Department of Mathematics \\
Azarbaijan University of Tarbiat Moallem\\
Tabriz, I.R. Iran\\
{\tt s.m.sheikholeslami@azaruniv.edu} \\ \\
$^b$School of Mathematical Sciences\\
Anhui University\\
Hefei, Anhui, 230601, China\\
{\tt hufu@mail.ustc.edu.cn}\\ \\
$^c$School of Mathematical Sciences\\
University of Science and Technology of China\\
Wentsun Wu Key Laboratory of CAS\\
Hefei, Anhui, 230026, China
}
\date{}
\begin{document}
\maketitle

\begin{abstract}
A {\em Roman dominating function} on a graph $G=(V,E)$ is a
function $f:V\rightarrow\{0,1,2\}$ such that every vertex $v\in V$
with $f(v)=0$ has at least one neighbor $u\in V$ with $f(u)=2$.
The {\em weight} of a Roman dominating function is the value
$f(V(G))=\sum_{u\in V(G)}f(u)$. The minimum weight of a Roman
dominating function on a graph $G$ is called the {\em Roman
domination number}, denoted by $\gamma_{R}(G)$.  The Roman bondage
number $b_{R}(G)$ of a graph $G$ with maximum degree at least two
is the minimum cardinality of all sets $E'\subseteq E(G)$ for
which $\gamma_{R}(G-E')>\gamma_R(G)$. In this paper, we first show
that the decision problem for determining $b_{\rm R}(G)$ is
NP-hard even for bipartite graphs and then we establish some sharp
bounds for $b_{\rm R}(G)$ and characterizes all graphs attaining
some of these bounds. \vspace{4mm}

\noindent{\bf Keywords:} Roman domination number, Roman bondage
number, NP-hardness.
\\ {\bf MSC 2010}: 05C69
\end{abstract}
\section{Introduction}
For terminology and notation on graph theory not given here, the
reader is referred to \cite{hh197,hh297,x03}.  In this paper, $G$
is a simple graph with vertex set $V=V(G)$ and edge set $E=E(G)$.
The order $|V|$ of $G$ is denoted by $n=n(G)$. For every vertex
$v\in V$, the {\em open neighborhood} $N(v)$ is the set $\{u\in
V\mid uv\in E\}$ and the {\em closed neighborhood} of $v$ is the
set $N[v] = N(v) \cup \{v\}$. The {\em degree} of a vertex $v\in
V$ is $\deg_G(v)=\deg(v)=|N(v)|$. The {\em minimum} and {\em
maximum degree} of a graph $G$ are denoted by $\delta=\delta(G)$
and $\Delta=\Delta(G)$, respectively. The {\em open neighborhood}
of a set $S\subseteq V$ is the set $N(S)=\cup_{v\in S}N(v)$, and
the {\em closed neighborhood} of $S$ is the set $N[S]=N(S)\cup S$.
The {\em complement} $\overline{G}$ of $G$ is the simple graph
whose vertex set is $V$ and whose edges are the pairs of
nonadjacent vertices of $G$.  We write $K_n$ for the {\em complete
graph} of order $n$ and $C_n$ for a {\em cycle} of length $n$. For
two disjoint nonempty sets $S,T\subset V(G)$, $E_G(S,T)=E(S,T)$
denotes the set of edges between $S$ and $T$.

A subset $S$ of vertices of $G$ is a {\em dominating set~} if
$|N(v)\cap S|\ge 1$ for every $v\in V-S$. The {\em domination
number} $\gamma(G)$  is the minimum cardinality of a dominating
set of $G$.  To measure the vulnerability or the stability of the
domination in an interconnection network under edge failure, Fink
et at. \cite{fjkr90} proposed the concept of the bondage number in
1990. The {\it bondage number}, denoted by $b(G)$, of $G$ is the
minimum number of edges whose removal from $G$ results in a graph
with larger domination number. An edge set $B$ for which
$\gamma(G-B)>\gamma(G)$ is called a {\it bondage set}. A
$b(G)$-set is a  bondage set of $G$ of size $b(G)$. If $B$ is a
$b(G)$-set, then obviously
\begin{equation}\label{eqb}\gamma(G-B)=\gamma(G)+1.\end{equation}

A {\em Roman dominating function} on a graph $G$ is a labeling
$f:V\rightarrow \{0, 1, 2\}$ such that every vertex with label 0
has at least one  neighbor with label 2. The weight of a Roman
dominating function is the value $f(V(G))=\sum_{u\in V(G)}f(u)$,
denoted by $f(G)$. The minimum weight of a Roman dominating
function on a graph $G$ is called the {\em Roman domination
number}, denoted by $\gamma_{R}(G)$.  A $\gamma_{R}(G)$-{\em
function} is a Roman dominating function on $G$ with weight
$\gamma_{R}(G)$. A Roman dominating function $f : V\rightarrow
\{0, 1, 2\}$ can be represented by the ordered partition $(V_0,
V_1, V_2)$ (or $(V_{0}^{f},V_{1}^{f},V_{2}^{f})$ to refer to $f$)
of $V$, where $V_i=\{v\in V\mid f(v) = i\}$. In this
representation, its weight is $\omega(f)=|V_1|+2|V_2|$. It is
clear that $V_1^f\cup V_2^f$ is a dominating set of $G$, called
{\it the Roman dominating set}, denoted by $D^f_{\rm
R}=(V_1,V_2)$. Since $V_1^f\cup V^f_2$ is a dominating set when
$f$ is an RDF, and since placing weight 2 at the vertices of a
dominating set yields an RDF, in \cite{cd04}, it was observed that
\begin{equation}\label{eqq}\gamma(G)\le \gamma_{R}(G)\le 2\gamma(G).\end{equation}
A graph $G$ is called to be {\it Roman} if $\gamma_{\rm
R}(G)=2\gamma(G)$.

The definition of the Roman dominating function was given
implicitly by Stewart \cite{s} and ReVelle and Rosing \cite{rr}.
Cockayne, Dreyer Jr., Hedetniemi and Hedetniemi \cite{cd04} as
well as  Chambers, Kinnersley, Prince and West \cite{ck10} have
given a lot of results on Roman domination. For more information
on Roman domination we refer the reader to \cite{ck10, cd04,CFM,
FKS, fy09, H,H1,HS, lk08, lkl05, pp02, rs07, sh07, sh10, xc06}.

Let  $G$ be a graph with maximum degree at least two. The {\em
Roman bondage number} $b_{R}(G)$ of $G$ is the minimum cardinality
of all sets $E'\subseteq E$ for which
$\gamma_{R}(G-E')>\gamma_{R}(G)$. Since in the study of Roman
bondage number the assumption $\Delta(G)\ge 2$ is necessary, we
always assume that when we discuss $b_R(G)$, all graphs involved
satisfy $\Delta(G)\ge 2$. The Roman bondage number $b_R(G)$ was
introduced by Jafari Rad and Volkmann in \cite{RV1}, and has been
further studied for example in \cite{AQ, DKSV, DSV, EP, HX, RV2}.

An edge set $B$ that $\gamma_{\rm R}(G-B)>\gamma_{\rm R}(G)$ is
called the {\it Roman bondage set}. A $b_R(G)$-set is a Roman
bondage set of $G$ of size $b_R(G)$. If $B$ is a $b_R(G)$-set,
then clearly
\begin{equation}\label{eqrb}\gamma_{\rm R}(G-B)=\gamma_{\rm R}(G)+1.\end{equation}

In this paper, we first show that the decision problem for
determining $b_{\rm R}(G)$ is NP-hard even for bipartite graphs
and then we establish some sharp bounds for $b_{\rm R}(G)$ and
characterizes all graphs attaining some of these bounds.

We make use of the following results in this paper.

\begin{prelem}\label{propA}
{\em (Chambers et al. \cite{ck10})} If G is a graph of order $n$,
then $\gamma_R(G)\le n-\Delta(G) + 1$.
\end{prelem}

\begin{lem}\label{propB}
{\rm (Cockayne et al. \cite{cd04})} For a grid graph $P_2\times
P_n$,
 $$
 \gamma_{\rm R}(P_2\times P_n)=n+1.
 $$
\end{lem}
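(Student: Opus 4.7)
The plan is to establish the two inequalities $\gamma_R(P_2\times P_n)\le n+1$ and $\gamma_R(P_2\times P_n)\ge n+1$ separately. For the upper bound I would exhibit an explicit Roman dominating function. Labeling the vertices by pairs $(i,j)$ with $i\in\{1,2\}$ and $j\in\{1,\ldots,n\}$, assign the value $2$ to each vertex of the form $(2,4k+1)$ or $(1,4k+3)$ whose column index lies in $\{1,\ldots,n\}$. Within each block of four consecutive columns this places two $2$'s of combined weight $4$ which together dominate all eight vertices of the block. A short check on $n\bmod 4$ shows that the rightmost incomplete block leaves at most one vertex undominated, to which we assign the value $1$; the resulting function is an RDF of total weight $n+1$.

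For the lower bound I would argue by induction on $n$, handling the base cases $n\le 3$ by direct enumeration (for $n=3$, any RDF of weight $3$ has at most $|V_1|+|V_2|(1+\Delta)\le 5$ dominated vertices, which is fewer than $6$). For the inductive step, let $f=(V_0,V_1,V_2)$ be a $\gamma_R$-function of $P_2\times P_n$ and set $c=f(1,n)+f(2,n)$. If $c\ge 1$, delete column $n$ and, for each vertex of column $n-1$ whose only $2$-neighbor had lain in column $n$, re-label it $1$; this yields an RDF on $P_2\times P_{n-1}$ of weight at most $\omega(f)-c+\alpha$, where $\alpha\le 2$ counts the corrections made. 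If instead $c=0$, both vertices of column $n-1$ are forced to take the value $2$ (since column $n$ can be dominated only from within columns $n$ and $n-1$), and I would delete the two rightmost columns with an analogous correction $\alpha'\le 2$ in column $n-2$, obtaining an RDF on $P_2\times P_{n-2}$ of weight at most $\omega(f)-4+\alpha'$. Invoking the inductive hypothesis on the smaller grid and case-checking $c\in\{0,1,2,3,4\}$ then forces $\omega(f)\ge n+1$ in every case.

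The main obstacle lies in the tight cases of the induction, where the slack is a single unit. The critical subcases are $c=1$ (weight drops by $1$ with correction $\alpha=0$) and $c=2$ with $f(1,n)=2,\ f(2,n)=0$ up to symmetry (weight drops by $2$ against a correction of at most $1$). Each of these leaves only one unit of slack against the inductive bound, so one must verify the counting carefully: in the $(2,0)$ subcase, for instance, only vertex $(1,n-1)$ can possibly have depended solely on a $2$ in column $n$, so $\alpha\le 1$ rather than $2$. A parallel check in the $c=0$ case shows that $\alpha'$ is at most $2$, the trivial bound by the width of the grid. Getting this sharp accounting correct across all values of $c$ is the delicate heart of the proof.
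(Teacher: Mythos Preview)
The paper does not prove this statement; it is quoted as Proposition~B from Cockayne et al.~\cite{cd04} and used as a black box (in the proof of Theorem~\ref{thm10}). So there is no in-paper argument to compare your attempt against.

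That said, your proposed proof is sound. The explicit RDF you describe for the upper bound does have weight $n+1$ after the boundary correction, and your induction for the lower bound goes through: the case $c=1$ forces a $2$ in column $n-1$ so $\alpha=0$; the case $c=2$ with pattern $(2,0)$ has $\alpha\le 1$; the case $c=0$ forces both entries of column $n-1$ to equal $2$, and deleting two columns gives $\omega(f)\ge (n-1)+4-\alpha'\ge n+1$. All remaining subcases ($c=2$ with pattern $(1,1)$, and $c\ge 3$) have at least two units of slack and are immediate. The only cosmetic point is that your base cases should explicitly include $n=2$ (needed when $c=0$ triggers a two-column deletion at $n=4$), which you already cover by ``$n\le 3$''.
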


\begin{lem}\label{propC}
{\rm (Cockayne et al. \cite{cd04})}\ For any graph $G$,
$\gamma(G)\leq \gamma_{\rm R}(G)\leq 2\gamma(G)$.
\end{lem}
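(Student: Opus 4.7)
The plan is to prove both inequalities by direct constructive arguments translating between dominating sets and Roman dominating functions, along the lines already hinted at in the paragraph preceding the statement.

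For the lower bound $\gamma(G)\le \gamma_{R}(G)$, I would take an arbitrary $\gamma_{R}(G)$-function $f=(V_0,V_1,V_2)$ and argue that $V_1\cup V_2$ is a dominating set of $G$. Indeed, every vertex $v\notin V_1\cup V_2$ lies in $V_0$, so by the defining property of an RDF it has a neighbor in $V_2\subseteq V_1\cup V_2$. Therefore
$$\gamma(G)\le |V_1|+|V_2|\le |V_1|+2|V_2|=\omega(f)=\gamma_{R}(G).$$

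For the upper bound $\gamma_{R}(G)\le 2\gamma(G)$, I would go in the opposite direction: let $D$ be a minimum dominating set of $G$ and define $f:V\to\{0,1,2\}$ by $f(v)=2$ if $v\in D$ and $f(v)=0$ otherwise. Every vertex with label $0$ lies outside $D$, hence has a neighbor in $D$ which carries label $2$; so $f$ is a valid RDF. Its weight is $2|D|=2\gamma(G)$, whence $\gamma_{R}(G)\le \omega(f)=2\gamma(G)$.

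There is no substantive obstacle here; the argument is essentially a bookkeeping exercise once one observes the dictionary between dominating sets and Roman dominating functions supported on labels in $\{0,2\}$. The only subtlety worth flagging is the distinction between weight and cardinality in the lower bound, namely that a vertex in $V_2$ contributes $2$ to $\omega(f)$ but only $1$ to the size of the dominating set $V_1\cup V_2$; this slack is precisely what permits the factor of $2$ in the upper bound, with equality attained exactly on the class of Roman graphs introduced just after the statement.
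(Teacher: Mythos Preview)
Your proof is correct and follows exactly the approach sketched in the paper: the paper does not give a formal proof of this cited proposition, but the sentence preceding (\ref{eqq}) outlines precisely your two constructions---that $V_1^f\cup V_2^f$ is a dominating set for any RDF $f$, and that assigning weight~$2$ to a minimum dominating set yields an RDF. Nothing more is needed.
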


\begin{lem}\label{propD}
{\rm (Cockayne et al. \cite{cd04})}\ For any graph $G$ of order n,
$\gamma(G)=\gamma_{\rm R}(G)$ if and only if $G=\bar{K_n}$.
\end{lem}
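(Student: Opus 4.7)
The plan is to prove both directions of the biconditional, with the forward implication (from equality to $G$ being edgeless) carrying all the content and the reverse implication being essentially a triviality.

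For the easy direction, I would first observe that if $G=\bar{K_n}$ then no vertex has any neighbor, so the only dominating set is $V(G)$ itself, giving $\gamma(G)=n$; similarly, in any Roman dominating function no zero-labeled vertex can find a neighbor labeled $2$, so $V_0=\emptyset$ is forced and hence $\gamma_R(G)\geq n$. Combined with the obvious upper bound obtained by labeling every vertex with $1$, this yields $\gamma_R(G)=n=\gamma(G)$.

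For the nontrivial direction, I would start with a $\gamma_R(G)$-function $f=(V_0,V_1,V_2)$ and exploit the fact that $V_1\cup V_2$ is always a dominating set of $G$. This gives the chain
\[
\gamma(G)\;\le\;|V_1|+|V_2|\;\le\;|V_1|+2|V_2|\;=\;\gamma_R(G),
\]
and the hypothesis $\gamma(G)=\gamma_R(G)$ forces equality throughout, so $|V_2|=0$. The definition of a Roman dominating function then forces $V_0=\emptyset$ as well, since every zero-labeled vertex would need a neighbor in $V_2$. Hence $V_1=V(G)$ and $\gamma(G)=\gamma_R(G)=n$.

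The final step is to argue that $\gamma(G)=n$ can only happen when $G=\bar{K_n}$: if there were any edge $uv\in E(G)$, then $V(G)\setminus\{u\}$ would still dominate $G$ (with $v$ covering $u$), contradicting $\gamma(G)=n$. I do not anticipate any real obstacle; the only point requiring a bit of care is to start with an actual $\gamma_R(G)$-function, rather than an arbitrary Roman dominating function, so that the equality in the displayed chain can be invoked to conclude $|V_2|=0$.
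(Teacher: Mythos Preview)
Your argument is correct and is essentially the standard proof of this fact. Note, however, that the paper does not supply its own proof of this proposition: it is quoted as a known result from Cockayne et al.~\cite{cd04} and used without proof, so there is no in-paper argument to compare against.
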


\begin{lem}\label{propE}
{\rm (Cockayne et al. \cite{cd04})}\ If $G$ is a connected graph
of order n, then $\gamma_{\rm R}(G) =\gamma(G)+1$ if and only if
there is a vertex $v\in V(G)$ of degree $n-\gamma(G)$.
\end{lem}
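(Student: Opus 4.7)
The plan is to prove both directions by exploiting the ordered-partition representation $(V_0,V_1,V_2)$ of a Roman dominating function, together with the basic inequalities between $\gamma$ and $\gamma_R$ recorded in Propositions \ref{propC} and \ref{propD}.

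For the sufficiency direction, suppose some vertex $v$ satisfies $\deg(v)=n-\gamma(G)$. I would construct an explicit RDF by setting $f(v)=2$, $f(u)=0$ for every $u\in N(v)$, and $f(u)=1$ for every $u\in V\setminus N[v]$. This is a valid RDF because each vertex labeled $0$ is by construction adjacent to $v$, and its weight is $2+|V\setminus N[v]|=2+(n-1-\deg(v))=\gamma(G)+1$. Hence $\gamma_R(G)\le \gamma(G)+1$. Combining this with the lower bound $\gamma_R(G)\ge \gamma(G)$ of Proposition \ref{propC} and with Proposition \ref{propD} (which says $\gamma_R(G)=\gamma(G)$ forces $G=\overline{K_n}$, excluded because $G$ is connected of order at least two), I conclude $\gamma_R(G)=\gamma(G)+1$.

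For the necessity direction, let $f=(V_0,V_1,V_2)$ be a $\gamma_R(G)$-function. Since $V_1\cup V_2$ is a dominating set, $|V_1|+|V_2|\ge \gamma(G)$, while $|V_1|+2|V_2|=\gamma(G)+1$ by hypothesis; subtracting yields $|V_2|\le 1$. If $|V_2|=0$, the RDF condition forces $V_0=\varnothing$, so $V=V_1$ and $n=\gamma(G)+1$; the classical bound $\gamma(G)\le n/2$ for graphs without isolated vertices then leaves only $n=2$, i.e.\ $G=K_2$, in which case any vertex has degree $1=n-\gamma(G)$. If $|V_2|=1$, write $V_2=\{v\}$; then $|V_1|=\gamma(G)-1$ and $|V_0|=n-\gamma(G)$, and every vertex of $V_0$ must be adjacent to $v$, so $\deg(v)\ge |V_0|=n-\gamma(G)$. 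For the reverse inequality, observe that $\{v\}\cup (V\setminus N[v])$ is a dominating set of size $n-\deg(v)$, so $\gamma(G)\le n-\deg(v)$, i.e.\ $\deg(v)\le n-\gamma(G)$. Combining yields $\deg(v)=n-\gamma(G)$.

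The main obstacle I anticipate is the degenerate $|V_2|=0$ subcase in the necessity direction: it is only through the auxiliary inequality $\gamma(G)\le n/2$ for graphs with no isolated vertex that one can collapse this case to $G=K_2$ and thereby still exhibit the required vertex of degree $n-\gamma(G)$. Everything else is a short structural manipulation of the partition $(V_0,V_1,V_2)$.
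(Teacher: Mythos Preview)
The paper does not supply its own proof of this proposition; it is simply quoted from Cockayne et al.\ \cite{cd04} as background.  So there is no in-paper argument to compare against, and your proposal stands on its own.

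Your proof is correct and is essentially the standard one.  The sufficiency direction via the explicit RDF with $V_2=\{v\}$, $V_0=N(v)$, $V_1=V\setminus N[v]$ is exactly the construction behind Proposition~\ref{propA}, and your appeal to Proposition~\ref{propD} to rule out $\gamma_R(G)=\gamma(G)$ is the right way to close the gap.  In the necessity direction, the counting $|V_1|+|V_2|\ge\gamma(G)$ versus $|V_1|+2|V_2|=\gamma(G)+1$ to force $|V_2|\le 1$ is the key step, and your two-sided bound on $\deg(v)$ in the $|V_2|=1$ case is clean.

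One small remark on the $|V_2|=0$ subcase: your detour through Ore's bound $\gamma(G)\le n/2$ works, but you can avoid importing that result.  If $|V_2|=0$ then $\gamma_R(G)=n$, while for any connected $G$ with $n\ge 3$ one has $\Delta(G)\ge 2$ and hence $\gamma_R(G)\le n-\Delta(G)+1\le n-1$ by Proposition~\ref{propA}, a contradiction; this leaves only $n\le 2$, and $G=K_2$ is then immediate.  Either route is fine.
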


\begin{lem}\label{propF}
{\rm (Hu and Xu \cite{propF})} If $G=K_{3,3,\ldots,3}$ is the
complete $t$-partite graph of order $n\ge 9$, then $b_{\rm
R}(G)=n-1$.
\end{lem}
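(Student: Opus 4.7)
The plan is first to invoke the prior results to pin down $\gamma_{\rm R}(G)=4$, and then to prove matching bounds $b_{\rm R}(G)\le n-1$ and $b_{\rm R}(G)\ge n-1$. Proposition~A gives $\gamma_{\rm R}(G)\le n-\Delta(G)+1=4$ since $\Delta(G)=n-3$. Because $\gamma(G)=2$ (any two vertices in different parts dominate) and $G\ne\overline{K_n}$, Lemma~D yields $\gamma_{\rm R}(G)\ge 3$, and Lemma~E rules out $\gamma_{\rm R}(G)=3$ since no vertex of $G$ has degree $n-2$. Hence $\gamma_{\rm R}(G)=4$. A short further analysis shows that every weight-$4$ RDF of $G$ has one of two forms: (A) $f(u)=f(v)=2$ with $u,v$ in different parts; (B) $f(u)=2$ with label $1$ on the two other vertices of $u$'s part, where $u$ must be adjacent to all $n-3$ vertices outside its part.

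For the upper bound $b_{\rm R}(G)\le n-1$, I would construct an explicit bondage set. Let one part be $A_1=\{u,a,a'\}$, and take $E'$ to consist of all $n-3$ edges incident to $u$, together with one edge $ax$ and one edge $a'y$ for chosen $x,y$ outside $A_1$; then $|E'|=n-1$. In $G-E'$ the vertex $u$ is isolated and must receive label at least $1$ in any RDF. The induced subgraph on $V\setminus\{u\}$ is $K_{2,3,3,\ldots,3}$ with the added edges $ax,a'y$ removed; the only weight-$3$ RDFs of $K_{2,3,\ldots,3}$ place label $2$ on a vertex of the size-$2$ part (by a degree count), and both $a$ and $a'$ now fail to be universally adjacent outside their part, so these weight-$3$ RDFs are destroyed. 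Hence $\gamma_{\rm R}(G-E')\ge 1+4=5$.

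For the lower bound $b_{\rm R}(G)\ge n-1$, I would show that any $E'\subseteq E(G)$ with $|E'|\le n-2$ still admits a weight-$4$ RDF of $G-E'$. Case~A: if some vertex $u$ has no incident edge in $E'$, then $u$ retains all its $n-3$ neighbors outside its part, so the Type~(B) RDF centered at $u$ is valid with weight $4$. Case~B: every vertex is incident to at least one edge of $E'$, which forces $2|E'|\ge n$ and at least four vertices to be incident to exactly one edge of $E'$. Here I would exhibit a surviving Type~(A) pair: vertices $u\in A_i$, $v\in A_j$ in different parts such that in $G-E'$ the four partmate edges from $u$ to $A_j\setminus\{v\}$ and from $v$ to $A_i\setminus\{u\}$ are intact, and for every $w$ outside $A_i\cup A_j$ at least one of $uw,vw$ is intact. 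A double-count shows that each removed edge is a partmate edge for at most four candidate pairs, while a disjunctive failure at some $w$ demands $w$ be incident to at least two edges of $E'$; combined with $|E'|\le n-2$, these estimates produce a surviving pair.

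The main obstacle is Case~B, and in particular the small case $t=3$ (so $n=9$), where the naive partmate estimate $4|E'|\le 28$ only barely exceeds the total count $n(n-3)/2=27$ of candidate Type~(A) pairs. Closing this gap requires combining the partmate bound with the disjunctive-failure bound $\sum_w\binom{d_w}{2}$, where $d_w$ is the number of edges of $E'$ incident to $w$, and exploiting the lower bound on the number of minimum-deficiency vertices forced by $|E'|\le n-2$.
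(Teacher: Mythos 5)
Note first that this proposition is not proved in the paper at all: it is quoted from Hu and Xu \cite{propF}, so your attempt can only be judged on its own merits. Much of it is sound. The value $\gamma_{\rm R}(G)=4$, the classification of weight-$4$ Roman dominating functions into type (A) (two $2$'s in different parts) and type (B) (a $2$ together with $1$'s on its two partmates), the observation that in $G-E'$ a type-(B) centre must have no removed incident edge while a type-(A) pair $u\in A_i$, $v\in A_j$ needs the four partmate edges intact and, for every $w$ outside $A_i\cup A_j$, at least one of $uw,vw$ intact --- all of this is correct. Your upper-bound set (all $n-3$ edges at $u$ plus one edge at each partmate $a,a'$) works: $u$ becomes isolated and the graph minus $u$ has order $n-1$ and maximum degree $n-4$, so by the paper's lemma characterizing $\gamma_{\rm R}=3$ via $\Delta=n-2$ its Roman domination number is at least $4$, giving $\gamma_{\rm R}(G-E')\ge 5$. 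Case A of the lower bound is likewise fine.

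The genuine gap is Case B, and it is larger than you acknowledge. Your plan is to bound the number of killed type-(A) pairs by $4|E'|+\sum_w\binom{d_w}{2}$ (partmate failures plus disjunctive failures, where $d_w$ is the number of removed edges at $w$) and compare with the $n(n-3)/2$ candidate pairs. With $|E'|=n-2$ the partmate term alone is $4n-8$, which already exceeds $27$ when $n=9$; and adding the disjunctive term only enlarges the killed-pair estimate, so ``combining'' the two bounds moves in the wrong direction. Worse, the combined estimate fails for \emph{every} $n\ge 9$, not just $n=9$: in Case B one can have a vertex $z$ with $d_z=n-4$ (remove all but one edge at $z$ and two further edges covering its partmates, a total of $n-2$ edges meeting every vertex), and then $4(n-2)+\binom{n-4}{2}>n(n-3)/2$, since $n(n-3)/2-(n-4)(n-5)/2=3n-10<4n-8$. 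In that configuration a surviving pair certainly exists (e.g.\ the one retained neighbour of $z$ paired with a $z$-neighbour of deficiency one in another part), but your counting cannot detect it, because it never controls multiplicities or exploits which vertices carry the removed edges. So the lower bound $b_{\rm R}(G)\ge n-1$ --- the substantive half of the statement --- is not established; closing it requires a structural argument (for instance, selecting the pair among vertices incident to exactly one removed edge and analysing the few possible failure patterns, with a separate treatment of configurations where removed edges concentrate at one vertex), none of which is carried out in the proposal.
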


\begin{lem}\label{propG}
{\rm (Jafari Rad and Volkmann \cite{RV1})} If $G$ is a connected
graph of order $n\ge 3$, then $b_{\rm R}(G)\le
\delta(G)+2\Delta(G)-3$.
\end{lem}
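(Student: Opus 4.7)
The plan is to construct an explicit edge set $B\subseteq E(G)$ of size at most $\delta+2\Delta-3$ for which $\gamma_R(G-B)>\gamma_R(G)$; the bound on $b_R(G)$ then follows directly from the definition.

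First I would pick a vertex $u$ of minimum degree $\delta$, a neighbour $v$ of $u$ satisfying $\deg(v)\ge 2$, and a second neighbour $w\in N(v)\setminus\{u\}$. A short connectedness argument shows such a $v$ exists: if every neighbour of $u$ were a leaf, the component of $u$ would be a star $K_{1,\delta}$ centred at a leaf-dominated vertex, forcing either $n\le 2$ or a contradiction with $u$ being of minimum degree. Then I would take
$$B=E_G(u)\cup(E_G(v)\setminus\{vw\})\cup(E_G(w)\setminus\{vw\}),$$
i.e., every edge incident with $u$, together with every edge incident with $v$ or $w$ except $vw$. An inclusion--exclusion count (the only possible overlaps among the three blocks are $uv\in E(u)\cap E(v)$ and, when present, $uw\in E(u)\cap E(w)$) gives $|B|\le\deg(u)+\deg(v)+\deg(w)-3\le\delta+2\Delta-3$.

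Next I would verify $\gamma_R(G-B)>\gamma_R(G)$. In $G-B$ the vertex $u$ is isolated, the pair $\{v,w\}$ forms an isolated $K_2$-component, and the induced subgraph on $V\setminus\{u,v,w\}$ is untouched; hence $\gamma_R(G-B)=1+2+\gamma_R(G-\{u,v,w\})=3+\gamma_R(G-\{u,v,w\})$. Conversely, labelling $v$ with $2$ extends any minimum Roman dominating function of $G-\{u,v,w\}$ to an RDF of $G$, because $v$ dominates both $u$ and $w$ and its remaining neighbours in $V\setminus\{u,v,w\}$ retain their original witnesses; this gives $\gamma_R(G)\le 2+\gamma_R(G-\{u,v,w\})$. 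Subtracting yields $\gamma_R(G-B)\ge\gamma_R(G)+1$, which is the required strict inequality.

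The main obstacle I anticipate is in fact twofold: checking that a neighbour $v$ with $\deg(v)\ge 2$ exists (which genuinely requires the hypotheses $n\ge 3$ and connectedness) and bookkeeping the inclusion--exclusion carefully when $u$ and $w$ are adjacent or share further common neighbours with $v$. Both are routine once set up, so the entire argument is short. Sharpness of the bound can be seen on $G=C_5$, where $\delta=\Delta=2$, $\gamma_R(C_5)=4$, $b_R(C_5)=3=\delta+2\Delta-3$, and the construction above produces exactly the three-edge bondage set.
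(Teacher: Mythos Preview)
The paper does not actually prove this proposition; it is quoted from Jafari Rad and Volkmann \cite{RV1} as a known result and used later without argument. So there is no ``paper's proof'' to compare against directly.

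That said, your argument is correct and is essentially the standard proof from \cite{RV1}. There the key lemma is that for any path $uvw$ (on three distinct vertices) one has $b_R(G)\le \deg(u)+\deg(v)+\deg(w)-3$, proved exactly by deleting all edges at $u,v,w$ except $vw$, so that $u$ becomes isolated and $\{v,w\}$ becomes an isolated $K_2$; the inequality $\gamma_R(G)\le 2+\gamma_R(G-\{u,v,w\})$ (label $v$ with $2$) then forces $\gamma_R(G-B)\ge\gamma_R(G)+1$. Specialising to a path with $\deg(u)=\delta$ gives the stated bound. Your write-up reproduces this line of reasoning faithfully; the existence of a neighbour $v$ of $u$ with $\deg(v)\ge 2$ and the inclusion--exclusion count are handled correctly, and the $C_5$ sharpness example is the standard one.
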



\begin{lem}\label{propH}
{\em {\rm (Fink et al.~\cite{fjkr90}, Rad and
Volkmann~\cite{RV1})} For a cycle $C_n$ of order $n$,
$$
b(C_n)= \left\{ \begin{array}{ll}
3, & {\rm if}\ n=1\, ({\rm mod}\, 3);\\
2, & {\rm otherwise}.
 \end{array} \right.
 $$

$$
b_{\rm R}(C_n)= \left\{ \begin{array}{ll}
3, & {\rm if}\ n=2\, ({\rm mod}\, 3);\\
2, & {\rm otherwise}.
 \end{array} \right.
 $$}
\end{lem}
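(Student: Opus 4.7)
My plan is to reduce both assertions to a short case analysis modulo $3$ by combining the identities $\gamma(C_n)=\lceil n/3\rceil$ and $\gamma_{\rm R}(C_n)=\lceil 2n/3\rceil$ with the analogous formulas $\gamma(P_m)=\lceil m/3\rceil$ and $\gamma_{\rm R}(P_m)=\lceil 2m/3\rceil$ for paths. The structural observation is that removing a set $B$ of $k$ edges from $C_n$ yields a disjoint union of exactly $k$ paths $P_{m_1}\cup\cdots\cup P_{m_k}$ with $m_1+\cdots+m_k=n$ and each $m_i\ge 1$, so both $\gamma$ and $\gamma_{\rm R}$ are additive over the components of $C_n-B$.

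Since deleting a single edge only produces $P_n$, for which $\gamma(P_n)=\gamma(C_n)$ and $\gamma_{\rm R}(P_n)=\gamma_{\rm R}(C_n)$, the lower bound $b(C_n),b_{\rm R}(C_n)\ge 2$ is immediate. The remaining task is, for each residue class of $n$ modulo $3$, to decide whether a $2$-edge removal can strictly increase the parameter, and if not, to exhibit a $3$-edge removal that does.

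Writing each $m_i=3m'_i+r_i$ with $r_i\in\{0,1,2\}$, a direct evaluation of the ceilings $\lceil m_i/3\rceil$ and $\lceil 2m_i/3\rceil$ in terms of $r_i$ reduces both excess quantities $\sum_i\lceil m_i/3\rceil-\lceil n/3\rceil$ and $\sum_i\lceil 2m_i/3\rceil-\lceil 2n/3\rceil$ to elementary sums depending only on the multiset of residues $r_1,\ldots,r_k$ and on $n\bmod 3$. Running through the three cases then yields, for the Roman parameter, that a split of residue type $(1,2)$ if $n\equiv 0$ or of type $(2,2)$ if $n\equiv 1$ raises $\gamma_{\rm R}$ with only two edges removed, whereas if $n\equiv 2$ every feasible pair $(0,2)$, $(2,0)$, or $(1,1)$ leaves $\gamma_{\rm R}$ unchanged, forcing $b_{\rm R}(C_n)\ge 3$, and a three-edge split of type $(1,2,2)$ then realises this value. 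The ordinary bondage claim is proved by the same machinery with the ``hard'' residue shifted to $n\equiv 1$: the splits $(1,2)$ for $n\equiv 0$ and $(1,1)$ for $n\equiv 2$ give $b(C_n)=2$, while for $n\equiv 1$ each of $(0,1)$, $(1,0)$, $(2,2)$ keeps $\gamma$ invariant and a three-edge split of type $(1,1,2)$ shows $b(C_n)\le 3$.

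The main obstacle is simply the feasibility of these winning splits, i.e.\ ensuring every $m_i\ge 1$; this holds for all $n\ge 5$ in the critical residue classes, and the small cases ($n\in\{3,4\}$) can be settled by direct inspection.
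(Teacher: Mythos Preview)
The paper does not supply its own proof of this proposition: it is quoted, without argument, from Fink et~al.\ and from Rad and Volkmann, and is used only as a ready-made fact (after Theorem~\ref{thm8}). So there is no in-paper proof to compare your attempt against.

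That said, your argument is correct and is essentially the standard one. The reduction to residues via
\[
\gamma(P_m)=\Big\lceil\frac{m}{3}\Big\rceil,\qquad
\gamma_{\rm R}(P_m)=\Big\lceil\frac{2m}{3}\Big\rceil,
\]
together with the observation that deleting $k$ edges of $C_n$ produces $k$ paths summing to $n$ vertices, is exactly how these values are computed in the original sources. Your residue tables are accurate: for $\gamma_{\rm R}$ the excess over $\lceil 2n/3\rceil$ equals $\lfloor(\sum r_i)/3\rfloor$, which is positive for some two-edge split iff $n\not\equiv 2\pmod 3$; and the $\gamma$ case shifts the obstruction to $n\equiv 1\pmod 3$, just as you state.

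One minor remark: the feasibility caveat at the end is overly cautious. Each of your ``winning'' splits---$(1,2)$ for $n\equiv 0$, $(2,2)$ or $(1,1,2)$ for $n\equiv 1$, and $(1,1)$ or $(1,2,2)$ for $n\equiv 2$---is already realisable at the smallest cycle in that residue class ($n=3,4,5$ respectively), so no separate treatment of $n\in\{3,4\}$ is needed.
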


\begin{observation}\label{ob1}
Let $G$ be a connected graph of order $n\ge 3$. Then $\gamma_{\rm
R}(G)=2$ if and only if $\Delta(G)=n-1$.
\end{observation}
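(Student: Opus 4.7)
The plan is to prove both implications directly from the definition of a Roman dominating function, using the representation $f=(V_0,V_1,V_2)$ with weight $|V_1|+2|V_2|$.

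For the forward direction, I would assume $\gamma_R(G)=2$ and take a $\gamma_R(G)$-function $f=(V_0,V_1,V_2)$. The equation $|V_1|+2|V_2|=2$ leaves exactly two possibilities: either $|V_2|=1$ and $|V_1|=0$, or $|V_2|=0$ and $|V_1|=2$. In the second case, since $n\ge 3$, the set $V_0$ is nonempty, but every vertex in $V_0$ requires a neighbor in $V_2=\emptyset$, a contradiction. Hence the first case holds: there is a unique vertex $v$ with $f(v)=2$, while every other vertex lies in $V_0$ and therefore must be adjacent to $v$. This forces $\deg(v)=n-1$, and thus $\Delta(G)=n-1$.

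For the reverse direction, if $v$ is a vertex with $\deg(v)=n-1$, the assignment $f(v)=2$ and $f(u)=0$ for every $u\ne v$ is clearly an RDF of weight $2$, so $\gamma_R(G)\le 2$. The lower bound $\gamma_R(G)\ge 2$ follows by ruling out weight at most $1$: weight $0$ is impossible since $V(G)\ne\emptyset$, while weight $1$ would give $|V_1|=1$ and $|V_2|=0$, leaving the $n-1\ge 2$ vertices of $V_0$ with no neighbor of label $2$. Combining both bounds yields $\gamma_R(G)=2$.

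There is no substantive obstacle here; the whole statement reduces to listing the two ways to split the integer $2$ as $|V_1|+2|V_2|$ and observing which configurations are compatible with the Roman domination condition when $n\ge 3$. One could also derive the observation more quickly by combining Propositions~\ref{propC}, \ref{propD} and \ref{propE} (noting $\gamma_R(G)=2$ forces $\gamma(G)=1$, hence a vertex of degree $n-1$), but the direct case analysis above is shorter and self-contained.
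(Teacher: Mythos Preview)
Your argument is correct. Note, however, that the paper states this as an \emph{observation} and gives no proof at all; it is treated as immediate from the definition of a Roman dominating function. Your write-up simply supplies the routine case analysis that the authors considered evident, and the alternative route you mention via Propositions~\ref{propC}, \ref{propD} and \ref{propE} is equally valid.
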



\begin{observation}\label{ob2}
Let $G$ be a graph of order $n$ with maximum degree at least two.
Assume that $H$ is a spanning subgraph of $G$ with $\gamma_{\rm
R}(H)=\gamma_{\rm R}(G)$. If $K=E(G)-E(H)$, then $b_{\rm R}(H)\le
b_{\rm R}(G)\le b_{\rm R}(H)+|K|$.
\end{observation}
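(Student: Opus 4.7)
The plan is to exploit the (easy but crucial) fact that removing edges cannot decrease the Roman domination number, i.e.\ $\gamma_R(G-e)\ge \gamma_R(G)$ for every edge $e$, since any Roman dominating function of $G-e$ remains a Roman dominating function of $G$. I would record this monotonicity as the first step, and iterate it to conclude that $\gamma_R(G_1)\le \gamma_R(G_2)$ whenever $G_2$ is a spanning subgraph of $G_1$.

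For the right inequality $b_R(G)\le b_R(H)+|K|$, I would take a $b_R(H)$-set $B_H\subseteq E(H)$ and set $B_G=B_H\cup K$. Because $K$ and $E(H)$ partition $E(G)$, a direct edge count gives $E(G)\setminus B_G=E(H)\setminus B_H$, i.e.\ $G-B_G=H-B_H$ on the common vertex set $V$. Using the hypothesis $\gamma_R(H)=\gamma_R(G)$ together with $\gamma_R(H-B_H)>\gamma_R(H)$, this identification of graphs yields $\gamma_R(G-B_G)>\gamma_R(G)$, so $B_G$ is a Roman bondage set of $G$ of size at most $b_R(H)+|K|$.

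For the left inequality $b_R(H)\le b_R(G)$, I would take a $b_R(G)$-set $B$ and restrict to $H$ by setting $B'=B\cap E(H)$. The key observation is that $E(H-B')=E(H)\setminus B\subseteq E(G)\setminus B=E(G-B)$, so $H-B'$ is a spanning subgraph of $G-B$. Applying the monotonicity from the first step and then the assumption $\gamma_R(H)=\gamma_R(G)$ gives
\[
\gamma_R(H-B')\;\ge\;\gamma_R(G-B)\;>\;\gamma_R(G)\;=\;\gamma_R(H),
\]
so $B'$ is a Roman bondage set of $H$ with $|B'|\le |B|=b_R(G)$.

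I do not expect any serious obstacle; the two chief points to be careful about are (i) correctly tracking which edge sets lie in which graph so that the equality $G-B_G=H-B_H$ and the inclusion $H-B'\subseteq G-B$ are verified on the nose, and (ii) implicitly ensuring that $\Delta(H)\ge 2$ so that $b_R(H)$ is defined, which is the standing hypothesis in this paper whenever Roman bondage numbers are discussed.
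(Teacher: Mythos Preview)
Your argument is correct and is exactly the natural proof; the paper states this as an Observation without proof, so there is nothing to compare against. Your caution about $\Delta(H)\ge 2$ can in fact be justified: if $\Delta(H)\le 1$ then $\gamma_R(H)=n$, whereas $\gamma_R(G)\le n-\Delta(G)+1\le n-1$ by Proposition~A, contradicting $\gamma_R(H)=\gamma_R(G)$.
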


\begin{lem}\label{propG}
Let $G$ be a nonempty graph of order $n\geq 3$, then $\gamma_{\rm
R}(G)=3$ if and only if $\Delta(G)=n-2$.
\end{lem}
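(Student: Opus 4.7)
The plan is to prove both implications by inspecting a minimum Roman dominating function and using Observation~\ref{ob1} to preclude $\gamma_{\rm R}(G)=2$.

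For the sufficiency, I would take a vertex $v$ of degree $n-2$ and let $u$ be its unique non-neighbor in $V(G)\setminus\{v\}$. The labelling $f(v)=2$, $f(u)=1$, and $f(w)=0$ for every other vertex $w$ is clearly an RDF of weight $3$, so $\gamma_{\rm R}(G)\le 3$. For the matching lower bound, any RDF on a graph with $n\ge 2$ has weight at least $2$ (a single label $1$ leaves some vertex with label $0$ and no neighbor of weight $2$), and Observation~\ref{ob1} forces $\gamma_{\rm R}(G)\ge 3$ whenever $\Delta(G)\ne n-1$; hence $\gamma_{\rm R}(G)=3$.

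For the necessity, I would take a $\gamma_{\rm R}(G)$-function $f=(V_0,V_1,V_2)$. The weight equation $|V_1|+2|V_2|=3$ admits only the two possibilities $(|V_1|,|V_2|)=(1,1)$ and $(|V_1|,|V_2|)=(3,0)$. In the first case, writing $V_2=\{v\}$, the Roman condition forces $V_0\subseteq N(v)$, so $\deg(v)\ge |V_0|=n-2$, giving $\Delta(G)\ge n-2$; Observation~\ref{ob1} excludes $\Delta(G)=n-1$ (which would give $\gamma_{\rm R}(G)=2$), leaving $\Delta(G)=n-2$. In the second case, $V_0=\emptyset$ forces $n=3$; the assumption that $G$ is nonempty gives $\Delta(G)\ge 1$, and $\Delta(G)=n-1=2$ is again excluded by Observation~\ref{ob1}, so $\Delta(G)=1=n-2$.

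The main point of care is the $(|V_1|,|V_2|)=(3,0)$ subcase, where one must combine the nonempty hypothesis with Observation~\ref{ob1} to rule out $\Delta(G)=2$ and pin down $\Delta(G)=1=n-2$ for $n=3$; everything else is a direct consequence of the weight equation $|V_1|+2|V_2|=3$ and the defining property of a Roman dominating function.
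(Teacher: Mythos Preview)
Your proof is correct and follows essentially the same route as the paper: construct the obvious weight-$3$ RDF for sufficiency, and for necessity split on whether $V_2=\emptyset$ (your $(3,0)$ case) or $|V_1|=|V_2|=1$. The only cosmetic difference is that the paper invokes Proposition~\ref{propA} ($\gamma_{\rm R}(G)\le n-\Delta(G)+1$) to get $\Delta(G)\le n-2$ up front, whereas you reach the same conclusion by appealing to Observation~\ref{ob1} to rule out $\Delta(G)=n-1$ after establishing $\Delta(G)\ge n-2$; both arguments are equivalent.
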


\begin{proof}
Let $\Delta(G)=n-2$. Assume that $u$ is a vertex of degree $n-2$ and
$v$ is the unique vertex not adjacent to $u$ in $G$. By Observation
\ref{ob1},  $\gamma_{\rm R}(G)\geq 3$ and clearly
$f=(V(G)-\{u,v\},\{v\},\{u\})$ is a Roman dominating set of $G$ with
$f(G)=3$. Thus, $\gamma_{\rm R}(G)=3$.

Conversely, assume $\gamma_{\rm R}(G)=3$. Then $\Delta(G)\leq n-2$
by Proposition \ref{propA}. Let $f=(V_0,V_1,V_2)$ be a $\gamma_{\rm
R}$-function of $G$. If $V_2=\emptyset$, then $f(v)=1$ for each
vertex $v\in V(G)$, and hence $n=3$. Sine $G$ is nonempty and
$\Delta(G)\leq n-2=1$, we have $\Delta(G)=n-2=1$. Let
$V_2\neq\emptyset$. Since $\gamma_{\rm R}(G)=3$, we deduce that
$|V_1|=|V_2|=1$. Suppose $V_1=\{v\}$ and $V_2=\{u\}$. Then other
$n-2$ vertices assigned $0$ are must be  adjacent to $u$. Thus,
$\Delta(G)\geq d_G(u)\geq n-2$ and hence $\Delta(G)=n-2$.
\end{proof}

\section{Complexity of Roman bondage number}

In this section, we will show that the Roman bondage number
problem is NP-hard and the Roman domination number problem is
NP-complete even for bipartite graphs. We first state the problem
as the following decision problem.

\begin{center}
\begin{minipage}{130mm}
\setlength{\baselineskip}{24pt}

\vskip6pt\noindent {\bf Roman bondage number problem (RBN):}

\noindent {\bf Instance:}\ {\it A nonempty bipartite graph $G$ and
a positive integer $k$.}

\noindent {\bf Question:}\ {\it Is $b_{\rm R}(G)\le k$?}

\end{minipage}
\end{center}

\begin{center}
\begin{minipage}{130mm}
\setlength{\baselineskip}{24pt}

\vskip6pt\noindent {\bf Roman domination number problem (RDN):}

\noindent {\bf Instance:}\ {\it A nonempty bipartite graph $G$ and
a positive integer $k$.}

\noindent {\bf Question:}\ {\it Is $\gamma_{\rm R}(G)\le k$?}

\end{minipage}
\end{center}

Following Garey and Johnson's techniques for proving
NP-completeness given in~\cite{gj79}, we prove our results by
describing a polynomial transformation from the known-well
NP-complete problem: 3SAT. To state 3SAT, we recall some terms.

Let $U$ be a set of Boolean variables. A {\it truth assignment}
for $U$ is a mapping $t: U\to\{T,F\}$. If $t(u)=T$, then $u$ is
said to be ``\,true" under $t$; If $t(u)=F$, then $u$ is said to
be ``\,false" under $t$. If $u$ is a variable in $U$, then $u$ and
$\bar{u}$ are {\it literals} over $U$. The literal $u$ is true
under $t$ if and only if the variable $u$ is true under $t$; the
literal $\bar{u}$ is true if and only if the variable $u$ is
false.

A {\it clause} over $U$ is a set of literals over $U$. It
represents the disjunction of these literals and is {\it
satisfied} by a truth assignment if and only if at least one of
its members is true under that assignment. A collection $\mathscr
C$ of clauses over $U$ is {\it satisfiable} if and only if there
exists some truth assignment for $U$ that simultaneously satisfies
all the clauses in $\mathscr C$. Such a truth assignment is called
a {\it satisfying truth assignment} for $\mathscr C$. The 3SAT is
specified as follows.

\begin{center}
\begin{minipage}{130mm}
\setlength{\baselineskip}{24pt}

\vskip6pt\noindent {\bf $3$-satisfiability problem (3SAT):}

\noindent {\bf Instance:}\ {\it A collection
$\mathscr{C}=\{C_1,C_2,\ldots,C_m\}$ of clauses over a finite set
$U$ of variables such that $|C_j| =3$ for $j=1, 2,\ldots,m$.}

\noindent {\bf Question:}\ {\it Is there a truth assignment for
$U$ that satisfies all the clauses in $\mathscr{C}$?}

\end{minipage}
\end{center}

\begin{theorem}\label{thm3}
{\em (Theorem 3.1 in~\cite{gj79})} 3SAT is NP-complete.
\end{theorem}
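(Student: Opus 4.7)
The plan is the classical two-step argument. First, I would invoke the Cook--Levin theorem to establish that general satisfiability (SAT) is NP-complete, and then reduce SAT to 3SAT in polynomial time by a clause-splitting gadget. Membership $3\mathrm{SAT}\in\mathrm{NP}$ is immediate: a candidate truth assignment is verified by checking every clause in time linear in the formula size, so a nondeterministic algorithm can guess and verify an assignment in polynomial time.

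For the Cook--Levin step, given any language $L\in\mathrm{NP}$ decided by a nondeterministic Turing machine $M$ in time $p(n)$, I would build, for each input $x$, a CNF formula $\phi_x$ whose Boolean variables encode the state of $M$, the head position, and the tape contents at each of the $O(p(|x|))$ time steps. Clauses would enforce a valid initial configuration matching $x$, a unique state/head/symbol per step, a legal transition between consecutive configurations, and acceptance at the final step. The resulting $\phi_x$ has size polynomial in $|x|$ and is satisfiable if and only if $x\in L$, which yields a polynomial many-one reduction from $L$ to SAT. To then reduce SAT to 3SAT, each clause $C=\{\ell_1,\ldots,\ell_k\}$ of the input formula is rewritten so that every new clause contains exactly three literals: a clause with $k>3$ is replaced by the chain
\[
\{\ell_1,\ell_2,y_1\},\ \{\bar y_1,\ell_3,y_2\},\ \ldots,\ \{\bar y_{k-3},\ell_{k-1},\ell_k\},
\]
using fresh variables $y_1,\ldots,y_{k-3}$, while a short clause ($k\in\{1,2\}$) is padded with one or two fresh variables and expanded into the $2^{3-k}$ 3-clauses that cover every sign pattern on the padding literals. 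A direct induction on $k$ shows that each gadget is satisfiable under an extended assignment if and only if the original clause was satisfied under the original assignment, so the overall construction is a polynomial-time reduction from SAT to 3SAT.

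The main obstacle is the Cook--Levin step: spelling out $\phi_x$ in enough detail to justify both its polynomial size and its soundness/completeness demands careful bookkeeping of the encoding of configurations and of the local consistency clauses modelling the transition relation of $M$. The SAT-to-3SAT gadget reduction, by contrast, is essentially mechanical once the chain satisfiability is verified, and it preserves polynomial-time computability because the total number of new clauses and auxiliary variables is linear in the total length of the input formula.
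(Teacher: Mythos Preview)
Your proof sketch is correct and follows the standard textbook approach (Cook--Levin for SAT, then the clause-splitting reduction to 3SAT). However, the paper does not prove this theorem at all: it is stated purely as a citation of Theorem~3.1 in Garey and Johnson~\cite{gj79}, with no accompanying argument. The authors invoke it solely as a known result to justify the subsequent polynomial-time reduction from 3SAT to the Roman bondage number problem. So while your outline is mathematically sound, there is nothing in the paper to compare it against beyond the bare citation.
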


\begin{theorem}\label{thm4}
RBN is NP-hard even for bipartite graphs.
\end{theorem}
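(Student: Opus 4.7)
The plan is to exhibit a polynomial-time reduction from 3SAT, which is NP-complete by Theorem~\ref{thm3}. Given a 3SAT instance with variable set $U=\{u_1,\ldots,u_n\}$ and clauses $\mathscr{C}=\{C_1,\ldots,C_m\}$, I will build in time polynomial in $n+m$ a bipartite graph $G=G_{\mathscr{C}}$ together with a positive integer $k$ such that $b_{\rm R}(G_{\mathscr{C}})\le k$ if and only if $\mathscr{C}$ is satisfiable.

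The construction will use one \emph{variable gadget} $H_i$ per variable $u_i$ and one \emph{clause vertex} $c_j$ per clause $C_j$. Each $H_i$ will be a small bipartite graph containing two distinguished ``literal'' vertices $x_i,\bar{x}_i$ representing $u_i$ and $\bar{u}_i$, engineered so that its $\gamma_{\rm R}$-functions split cleanly into two cheapest configurations corresponding to the two possible truth values of $u_i$. The vertex $c_j$ will be joined to the literal vertex in $H_i$ of each literal appearing in $C_j$; this keeps $G_{\mathscr{C}}$ bipartite with literal vertices on one side and clause vertices plus appropriate gadget padding on the other. I will then establish a closed-form expression
\begin{equation*}
\gamma_{\rm R}(G_{\mathscr{C}})=\sum_{i=1}^{n}\gamma_{\rm R}(H_i)+\alpha m+\beta,
\end{equation*}
and prove that an optimal Roman dominating function of $G_{\mathscr{C}}$ can exist precisely when the labels on the literal vertices encode a satisfying assignment of $\mathscr{C}$: each $H_i$ uses one of its two cheapest configurations, and the resulting label-$2$ literal vertices jointly dominate every $c_j$.

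Once $\gamma_{\rm R}(G_{\mathscr{C}})$ is pinned down, I would identify a distinguished edge $e^{*}$ (or a short list of $k$ edges) whose deletion can be absorbed by an optimal Roman dominating function only when there is slack in the satisfiability structure. The target equivalence is: $\gamma_{\rm R}(G_{\mathscr{C}}-E')>\gamma_{\rm R}(G_{\mathscr{C}})$ for some $E'$ with $|E'|\le k$ iff $\mathscr{C}$ is satisfiable, which is exactly $b_{\rm R}(G_{\mathscr{C}})\le k$. The main obstacle is the combinatorial design of $H_i$: it must be bipartite, of bounded size, and its $\gamma_{\rm R}$-functions must be rigidly forced into exactly the two ``truth-value'' types, without any cheap alternative that decouples the gadget from the clause vertices. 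Standard bondage reductions often rely on odd cycles for such rigidity, which are unavailable here, so I expect to lean on Propositions~\ref{propA} and \ref{propE} (the latter to pin down graphs with $\gamma_{\rm R}=\gamma+1$), and possibly on path or grid fragments whose Roman domination number is known by Proposition~\ref{propB}, with a careful case analysis both for computing $\gamma_{\rm R}(G_{\mathscr{C}})$ and for verifying the effect of deleting $e^{*}$.
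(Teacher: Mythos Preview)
Your high-level blueprint (polynomial reduction from 3SAT via per-variable gadgets $H_i$ and per-clause vertices $c_j$, with $b_{\rm R}(G)\le k$ iff $\mathscr{C}$ is satisfiable) matches the paper's approach, and your target equivalence is stated correctly. However, the proposal has a genuine gap beyond the deferred gadget design.

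The missing idea is on the unsatisfiable side. To conclude $b_{\rm R}(G)>k$ when $\mathscr{C}$ is unsatisfiable you must show that deleting \emph{any} set $E'$ of at most $k$ edges leaves $\gamma_{\rm R}$ unchanged, i.e.\ you need a uniform upper bound $\gamma_{\rm R}(G-E')\le \gamma_{\rm R}(G)$ over all such $E'$. Your plan only discusses a distinguished edge $e^{*}$ and how its deletion behaves; it provides no mechanism to cap $\gamma_{\rm R}(G-e)$ for every edge $e$. In the paper this is exactly the role of the extra path $P=s_1s_2s_3$ with $s_1,s_3$ joined to every clause vertex: (i) the edge $s_1s_2$ serves as your $e^{*}$ (Claim~4.4), and (ii) because each of $s_1,s_3$ can single-handedly dominate all clause vertices, one can exhibit, for \emph{every} edge $e$, a Roman dominating function of $G-e$ of weight $4n+3$ (Claim~4.3). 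Combined with $\gamma_{\rm R}(G)\ge 4n+2$ (Claim~4.1) and $\gamma_{\rm R}(G)=4n+2\Leftrightarrow$ satisfiable (Claim~4.2), this forces $\gamma_{\rm R}(G)=4n+3$ and hence $b_{\rm R}(G)\ge 2$ in the unsatisfiable case. Without an analogous ``global dominator'' device attached to the clause vertices, your construction has no way to ensure that removing an arbitrary edge does not already raise $\gamma_{\rm R}$ when $\mathscr{C}$ is unsatisfiable, and the reduction can fail.

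Two smaller points: your proposed formula $\gamma_{\rm R}(G_{\mathscr{C}})=\sum_i\gamma_{\rm R}(H_i)+\alpha m+\beta$ with a per-clause term is not what one gets in this type of construction---in the paper the clause vertices contribute nothing to the optimum (they are dominated for free by literal vertices), and the additive constant comes from the path $P$, not from $m$. And Propositions~\ref{propB} and~\ref{propE} play no role here; the only auxiliary fact the paper uses inside the gadget analysis is Lemma~\ref{propG} (the characterization $\gamma_{\rm R}=3\Leftrightarrow \Delta=n-2$) to force $\gamma_{\rm R}(H_i')\ge 4$.
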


\begin{proof}
The transformation is from 3SAT. Let $U=\{u_1,u_2,\ldots,u_n\}$
and $\mathscr{C}=\{C_1,C_2,\ldots,$ $C_m\}$ be an arbitrary
instance of 3SAT. We will construct a bipartite graph $G$ and
choose an integer $k$ such that $\mathscr{C}$ is satisfiable if
and only if $b_{\rm R}(G)\leq k$. We construct such a graph $G$ as
follows.

For each $i=1,2,\ldots,n$, corresponding to the variable $u_i\in
U$, associate a graph $H_i$ with vertex set
$V(H_i)=\{u_i,\bar{u}_i,v_i,v_i',x_i,y_i,z_i,w_i\}$ and edge set
$E(H_i)=\{u_iv_i,u_iz_i,\bar{u}_iv_i',\\
\bar{u}_iz_i,y_iv_i,y_iv_i',y_iz_i,w_iv_i,w_iv_i',w_iz_i,
x_iv_i,x_iv_i'\}$. For each $j=1,2,\ldots,m$, corresponding to the
clause $C_j=\{p_j,q_j,r_j\}\in \mathscr{C}$, associate a single
vertex $c_j$ and add edge set $E_j=\{c_jp_j, c_jq_j,c_jr_j\}$,
$1\le j\le m$. Finally, add a path $P=s_1s_2s_3$, join $s_1$ and
$s_3$ to each vertex $c_j$ with $1\le j\le m$ and set $k=1$.

Figure~\ref{f1} shows an example of the graph obtained when
$U=\{u_1,u_2,u_3,u_4\}$ and $\mathscr{C}=\{C_1,C_2,C_3\}$, where
$C_1=\{u_1,u_2,\bar{u}_3\}, C_2=\{\bar{u}_1,u_2,u_4\},
C_3=\{\bar{u}_2,u_3,u_4\}$.

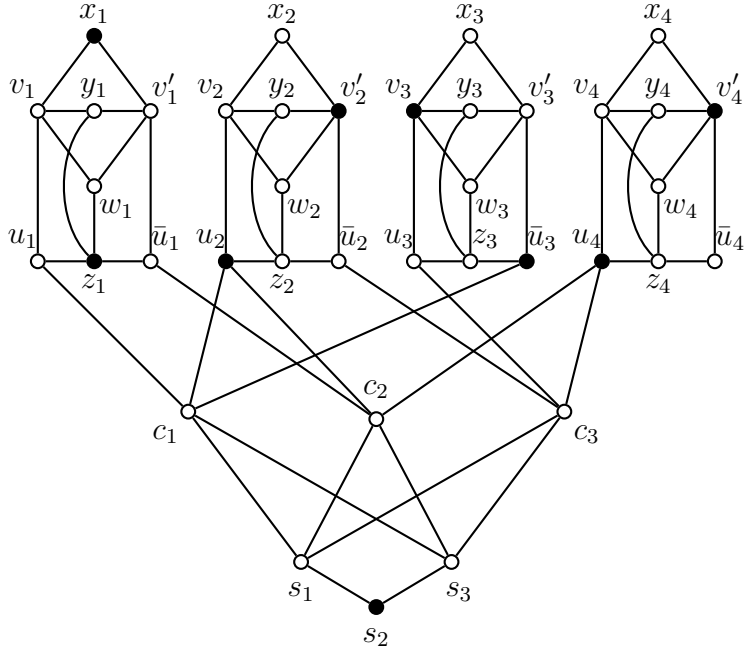
\begin{figure}[h]
\begin{center}
\begin{pspicture}(-5,-1.1)(5,7.7)

\cnode*(0,-.6){3pt}{s2}\rput(0,-1){$s_2$}
\cnode(-1,0){3pt}{s1}\rput(-1,-.4){$s_1$}
\cnode(1,0){3pt}{s3}\rput(1.1,-.4){$s_3$} \ncline{s2}{s1}
\ncline{s2}{s3}

\cnode(0,1.9){3pt}{c2}\rput(0,2.3){$c_2$} \ncline{c2}{s1}
\ncline{c2}{s3} \cnode(-2.5,2){3pt}{c1}\rput(-2.8,1.7){$c_1$}
\ncline{c1}{s1} \ncline{c1}{s3}
\cnode(2.5,2){3pt}{c3}\rput(2.8,1.7){$c_3$} \ncline{c3}{s1}
\ncline{c3}{s3}

\cnode(-4.5,4){3pt}{u1}\rput(-4.7,4.3){$u_1$}
\cnode(-3,4){3pt}{u1'}\rput(-2.8,4.3){$\bar{u}_1$}

\cnode*(-2,4){3pt}{u2}\rput(-2.2,4.3){$u_2$}
\cnode(-0.5,4){3pt}{u2'}\rput(-0.3,4.3){$\bar{u}_2$}

\cnode(0.5,4){3pt}{u3}\rput(0.3,4.3){$u_3$}
\cnode*(2,4){3pt}{u3'}\rput(2.2,4.3){$\bar{u}_3$}

\cnode*(3,4){3pt}{u4}\rput(2.8,4.3){$u_4$}
\cnode(4.5,4){3pt}{u4'}\rput(4.7,4.3){$\bar{u}_4$}

\cnode(-4.5,6){3pt}{v1}\rput(-4.7,6.3){$v_1$}
\cnode(-3,6){3pt}{v1'}\rput(-2.8,6.3){$v_1'$} \ncline{u1}{v1}
\ncline{u1'}{v1'} \cnode(-2,6){3pt}{v2}\rput(-2.2,6.3){$v_2$}
\cnode*(-0.5,6){3pt}{v2'}\rput(-0.3,6.3){$v_2'$} \ncline{u2}{v2}
\ncline{u2'}{v2'} \cnode*(0.5,6){3pt}{v3}\rput(0.3,6.3){$v_3$}
\cnode(2,6){3pt}{v3'}\rput(2.2,6.3){$v_3'$} \ncline{u3}{v3}
\ncline{u3'}{v3'} \cnode(3,6){3pt}{v4}\rput(2.8,6.3){$v_4$}
\cnode*(4.5,6){3pt}{v4'}\rput(4.7,6.3){$v_4'$} \ncline{u4}{v4}
\ncline{u4'}{v4'}

\cnode(-3.75,5){3pt}{w1}\rput(-3.45,4.7){$w_1$} \ncline{w1}{v1}
\ncline{w1}{v1'} \cnode(-1.25,5){3pt}{w2}\rput(-.95,4.7){$w_2$}
\ncline{w2}{v2} \ncline{w2}{v2'}
\cnode(1.25,5){3pt}{w3}\rput(1.55,4.7){$w_3$} \ncline{w3}{v3}
\ncline{w3}{v3'} \cnode(3.75,5){3pt}{w4}\rput(4.05,4.7){$w_4$}
\ncline{w4}{v4} \ncline{w4}{v4'}

\cnode*(-3.75,4){3pt}{z1}\rput(-3.75,3.7){$z_1$} \ncline{z1}{u1}
\ncline{z1}{u1'} \ncline{w1}{z1}
\cnode(-1.25,4){3pt}{z2}\rput(-1.25,3.7){$z_2$} \ncline{z2}{u2}
\ncline{z2}{u2'}  \ncline{w2}{z2}
\cnode(1.25,4){3pt}{z3}\rput(1.45,4.3){$z_3$} \ncline{z3}{u3}
\ncline{z3}{u3'}  \ncline{w3}{z3}
\cnode(3.75,4){3pt}{z4}\rput(3.75,3.7){$z_4$} \ncline{z4}{u4}
\ncline{z4}{u4'}  \ncline{w4}{z4}

\cnode(-3.75,6){3pt}{y1}\rput(-3.75,6.3){$y_1$} \ncline{y1}{v1}
\ncline{y1}{v1'}  \nccurve[angleA=-135,angleB=135]{y1}{z1}
\cnode(-1.25,6){3pt}{y2}\rput(-1.25,6.3){$y_2$} \ncline{y2}{v2}
\ncline{y2}{v2'}  \nccurve[angleA=-135,angleB=135]{y2}{z2}
\cnode(1.25,6){3pt}{y3}\rput(1.25,6.3){$y_3$} \ncline{y3}{v3}
\ncline{y3}{v3'}  \nccurve[angleA=-135,angleB=135]{y3}{z3}
\cnode(3.75,6){3pt}{y4}\rput(3.75,6.3){$y_4$} \ncline{y4}{v4}
\ncline{y4}{v4'}  \nccurve[angleA=-135,angleB=135]{y4}{z4}

\cnode*(-3.75,7){3pt}{x1}\rput(-3.75,7.3){$x_1$} \ncline{x1}{v1}
\ncline{x1}{v1'} \cnode(-1.25,7){3pt}{x2}\rput(-1.25,7.3){$x_2$}
\ncline{x2}{v2} \ncline{x2}{v2'}
\cnode(1.25,7){3pt}{x3}\rput(1.25,7.3){$x_3$} \ncline{x3}{v3}
\ncline{x3}{v3'} \cnode(3.75,7){3pt}{x4}\rput(3.75,7.3){$x_4$}
\ncline{x4}{v4} \ncline{x4}{v4'}

\ncline{c1}{u1} \ncline{c1}{u2} \ncline{c1}{u3'} \ncline{c2}{u1'}
\ncline{c2}{u2} \ncline{c2}{u4} \ncline{c3}{u2'} \ncline{c3}{u3}
\ncline{c3}{u4}
\end{pspicture}
\caption{\label{f1}\footnotesize An instance of the Roman bondage
number problem resulting from an instance of 3SAT. Here $k=1$ and
$\gamma_{\rm R}(G)=18$, where the bold vertex $w$ means a Roman
dominating function with $f(w)=2$.}
\end{center}
\end{figure}

To prove that this is indeed a transformation, we only need to
show that $b_{\rm R}(G)=1$ if and only if there is a truth
assignment for $U$ that satisfies all clauses in $\mathscr{C}$.
This aim can be obtained by proving the following four claims.

\begin{description}

\item [Claim 4.1] {\it $\gamma_{\rm R}(G)\geq 4n+2$. Moreover, if
$\gamma_{\rm R}(G)=4n+2$, then for any $\gamma_{\rm R}$-function
$f$ on $G$, $f(H_i)=4$ and at most one of $f(u_i)$ and
$f(\bar{u}_i)$ is 2 for each $i$, $f(c_j)=0$ for each $j$ and
$f(s_2)=2$.}

\begin{proof}
Let $f$ be a $\gamma_{\rm R}$-function of $G$, and let
$H_i'=H_i-u_i-\bar{u}_i$.

If $f(u_i)=2$ and $f(\bar{u}_i)=2$, then $f(H_i)\geq 4$. Assume
either $f(u_i)=2$ or $f(\bar{u}_i)=2$, if $f(x_i)=0$ or
$f(y_i)=0$, then there is at least one vertex $t$ in $\{v_i,
v_i',z_i\}$ such that $f(t)=2$. And hence $f(H_i')\ge 2$. Thus,
$f(H_i)\geq 4$.

If $f(u_i)\neq 2$ and $f(\bar{u}_i)\neq 2$, let $f'$ be a
restriction of $f$ on $H_i'$, then $f'$ is a Roman dominating
function of $H_i'$, and $f'(H_i')\geq \gamma_{\rm R}(H_i')$. Since
the maximum degree of $H_i'$ is $V(H_i')-3$, by Lemma~\ref{propG},
$\gamma_{\rm R}(H_i')>3$ and hence $f'(H_i')\geq 4$ and $f(H_i)\geq
4$. If $f(s_1)=0$ or $f(s_3)=0$, then there is at least one vertex
$t$ in $\{c_1, \cdots,c_m, s_2\}$ such that $f(t)=2$. Then
$f(N_G[V(P)])\geq 2$, and hence $\gamma_{\rm R}(G)\geq 4n+2$.

Suppose that $\gamma_{\rm R}(G)=4n+2$, then $f(H_i)=4$ and since
$f(N_G[x_i])\ge 1$, at most one of $f(u_i)$ and $f(\bar{u}_i)$ is
2 for each $i=1,2,\ldots,n$, while $f(N_G[V(P)])=2$. It follows
that $f(s_2)=2$ since $f(N_G[s_2])\ge 1$. Consequently, $f(c_j)=0$
for each $j=1,2,\ldots,m$.
\end{proof}

\item [Claim 4.2] {\it $\gamma_{\rm R}(G)=4n+2$ if and only if
$\mathscr{C}$ is satisfiable.}

\begin{proof}
Suppose that $\gamma_{\rm R}(G)=4n+2$ and let $f$ be a
$\gamma_{\rm R}$-function of $G$. By Claim 4.1, at most one of
$f(u_i)$ and $f(\bar{u}_i)$ is 2 for each $i=1,2,\ldots,n$. Define
a mapping $t: U\to \{T,F\}$ by
\begin{equation}\label{e4.1}
 t(u_i)=\left\{
 \begin{array}{l}
 T \ \ {\rm if}\ f(u_i)=2\ {\rm or}\ f(u_i)\neq 2\ {\rm and} f(\bar{u}_i)\neq 2, \\
 F \ \ {\rm if}\ f(\bar{u}_i)=2.
\end{array}
 \right.
 \ i=1,2,\ldots,n.
 \end{equation}

We now show that $t$ is a satisfying truth assignment for
$\mathscr{C}$. It is sufficient to show that every clause in
$\mathscr{C}$ is satisfied by $t$. To this end, we arbitrarily
choose a clause $C_j\in\mathscr{C}$ with $1\le j\le m$.

By Claim 4.1, $f(c_j)=f(s_1)=f(s_3)=0$. There exists some $i$ with
$1\le i\le n$ such that $f(u_i)=2$ or $f(\bar{u}_i)=2$ where $c_j$
is adjacent to $u_i$ or $\bar{u}_i$. Suppose that $c_j$ is
adjacent to $u_i$ where $f(u_i)=2$. Since $u_i$ is adjacent to
$c_j$ in $G$, the literal $u_i$ is in the clause $C_j$ by the
construction of $G$. Since $f(u_i)=2$, it follows that $t(u_i)=T$
by (\ref{e4.1}), which implies that the clause $C_j$ is satisfied
by $t$. Suppose that $c_j$ is adjacent to $\bar{u}_i$ where
$f(\bar{u}_i)=2$. Since $\bar{u}_i$ is adjacent to $c_j$ in $G$,
the literal $\bar{u}_i$ is in the clause $C_j$. Since
$f(\bar{u}_i)=2$, it follows that $t(u_i)=F$ by (\ref{e4.1}).
Thus, $t$ assigns $\bar{u}_i$ the truth value $T$, that is, $t$
satisfies the clause $C_j$. By the arbitrariness of $j$ with $1\le
j\le m$, we show that $t$ satisfies all the clauses in
$\mathscr{C}$, that is, $\mathscr{C}$ is satisfiable.

Conversely, suppose that $\mathscr{C}$ is satisfiable, and let $t:
U\to \{T,F\}$ be a satisfying truth assignment for $\mathscr{C}$.
Create a function $f$ on $V(G)$ as follows: if $t(u_i)=T$, then
let $f(u_i)=f(v_i')=2$, and if $t(u_i)=F$, then let
$f(\bar{u}_i)=f(v_i)=2$. Let $f(s_2)=2$. Clearly, $f(G)=4n+2$.
Since $t$ is a satisfying truth assignment for $\mathscr{C}$, for
each $j=1,2,\ldots,m$, at least one of literals in $C_j$ is true
under the assignment $t$. It follows that the corresponding vertex
$c_j$ in $G$ is adjacent to at least one vertex $w$ with $f(w)=2$
since $c_j$ is adjacent to each literal in $C_j$ by the
construction of $G$. Thus $f$ is a Roman dominating function of
$G$, and so $\gamma_{\rm R}(G)\leq f(G)= 4n+2$. By Claim 4.1,
$\gamma_{\rm R}(G)\geq 4n+2$, and so $\gamma_{\rm R}(G)=4n+2$.
\end{proof}

\item [Claim 4.3] {\it $\gamma_{\rm R}(G-e)\leq 4n+3$ for any
$e\in E(G)$.}

\begin{proof}
For any edge $e\in E(G)$, it is sufficient to construct a Roman
dominating function $f$ on $G-e$ with weight $4n+3$. We first
assume $e\in E_G(s_1)$ or $e\in E_G(s_3)$ or $e\in E_G(c_j)$ for
some $j=1,2,\ldots,m$, without loss of generality let $e\in
E_G(s_1)$ or $e=c_ju_i$ or $e=c_j\bar{u}_i$. Let $f(s_3)=2,
f(s_1)=1$ and $f(u_i)=f(v_i')=2$ for each $i=1,2,\ldots,n$. For
the edge $e\notin E_G(u_i)$ and $e\notin E_G(v_i')$, let
$f(s_1)=2, f(s_3)=1$ and $f(u_i)=f(v_i')=2$. For the edge $e\notin
E(\bar{u}_i)$ and $e\notin E(v_i)$, let $f(s_1)=2, f(s_3)=1$ and
$f(\bar{u}_i)=f(v_i)=2$. If $e=u_iv_i$ or $e=\bar{u}_iv_i'$, let
$f(s_1)=2, f(s_3)=1$ and $f(x_i)=f(z_i)=2$. Then $f$ is a Roman
dominating function of $G-e$ with $f(G-e)=4n+3$ and hence
$\gamma_{\rm R}(G-e)\leq 4n+3$.
\end{proof}

\item [Claim 4.4] {\it $\gamma_{\rm R}(G)=4n+2$ if and only if
$b_{\rm R}(G)=1$.}

\begin{proof}
Assume $\gamma_{\rm R}(G)=4n+2$ and consider the edge $e=s_1s_2$.
Suppose $\gamma_{\rm R}(G)=\gamma_{\rm R}(G-e)$. Let $f'$ be a
$\gamma_{\rm R}$-function of $G-e$. It is clear that $f'$ is also
a $\gamma_{\rm R}$-function on $G$. By Claim 4.1 we have
$f'(c_j)=0$ for each $j=1,2,\ldots,m$ and $f'(s_2)=2$. But then
$f'(N_{G-e}[s_1])=0$, a contradiction. Hence, $\gamma_{\rm
R}(G)<\gamma_{\rm R}(G-e)$, and so $b_{\rm R}(G)=1$.

Now, assume $b_{\rm R}(G)=1$. By Claim 4.1, we have  $\gamma_{\rm
R}(G)\geq 4n+2$. Let $e'$ be an edge such that $\gamma_{\rm
R}(G)<\gamma_{\rm R}(G-e')$. By Claim 4.3, we have that
$\gamma_{\rm R}(G-e')\leq 4n+3$. Thus, $4n + 2\leq \gamma_{\rm
R}(G)< \gamma_{\rm R}(G-e')\leq 4n+3$, which yields $\gamma_{\rm
R}(G)=4n+2$.
\end{proof}
\end{description}

By Claim 4.2 and Claim 4.4, we prove that $b_{\rm R}(G)=1$ if and
only if there is a truth assignment for $U$ that satisfies all
clauses in $\mathscr{C}$. Since the construction of the Roman
bondage number instance is straightforward from a
$3$-satisfiability instance, the size of the Roman bondage number
instance is bounded above by a polynomial function of the size of
$3$-satisfiability instance. It follows that this is a polynomial
reduction and the proof is complete.
\end{proof}

\begin{corollary}
Roman domination number problem is NP-complete even for bipartite
graphs.
\end{corollary}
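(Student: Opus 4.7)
The plan is to prove NP-completeness in two steps: first verify membership in NP, then give a polynomial reduction from 3SAT, reusing almost verbatim the construction built for Theorem~\ref{thm4}.

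For membership in NP, I would observe that a certificate is simply a function $f:V\to\{0,1,2\}$. Given such an $f$, one can in polynomial time compute $\omega(f)=\sum_{v\in V}f(v)$ and check, for every vertex $v$ with $f(v)=0$, whether some neighbor $u$ of $v$ satisfies $f(u)=2$. If both checks succeed and $\omega(f)\le k$, then $\gamma_{\rm R}(G)\le k$, so RDN lies in NP.

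For NP-hardness I would reduce from 3SAT, reusing the same bipartite graph $G$ constructed in the proof of Theorem~\ref{thm4} from an arbitrary 3SAT instance $(U,\mathscr{C})$ with $|U|=n$ and $|\mathscr{C}|=m$, and setting $k=4n+2$. This construction is polynomial in $n+m$, and $G$ is bipartite by the same argument used there. The key observation is that Claim~4.1 already establishes $\gamma_{\rm R}(G)\ge 4n+2$ unconditionally, and Claim~4.2 establishes that $\gamma_{\rm R}(G)=4n+2$ if and only if $\mathscr{C}$ is satisfiable. Combining these, we get the chain
\[
\gamma_{\rm R}(G)\le 4n+2 \;\Longleftrightarrow\; \gamma_{\rm R}(G)=4n+2 \;\Longleftrightarrow\; \mathscr{C}\text{ is satisfiable},
\]
which is exactly the equivalence required for the reduction with parameter $k=4n+2$.

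There is essentially no new obstacle to overcome: the lower bound in Claim~4.1 and the ``satisfiability $\Leftrightarrow$ minimum weight $4n+2$'' equivalence in Claim~4.2 are precisely the two ingredients that make the reduction go through for $\gamma_{\rm R}$ directly, without any reference to bondage. The only minor point to be careful about is the direction of the equivalence: since the decision problem phrases the question as ``$\gamma_{\rm R}(G)\le k$?'' rather than ``$=k$?'', we need the lower bound $\gamma_{\rm R}(G)\ge 4n+2$ from Claim~4.1 to upgrade ``$\le 4n+2$'' to ``$=4n+2$''. With that in hand, the reduction is complete, and combined with membership in NP we conclude that RDN is NP-complete on bipartite graphs.
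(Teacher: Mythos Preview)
Your proposal is correct. The membership-in-NP argument is the same as the paper's, and your reduction from 3SAT is valid: Claims~4.1 and~4.2 together give exactly the equivalence $\gamma_{\rm R}(G)\le 4n+2 \Leftrightarrow \mathscr{C}$ is satisfiable.

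There is a small but genuine difference from the paper's own proof. The paper does not reuse the graph $G$ of Theorem~\ref{thm4} verbatim; instead it \emph{removes} the path $P=s_1s_2s_3$ (and the edges from $s_1,s_3$ to the clause vertices), sets $k=4n$, and then observes that the analogues of Claims~4.1 and~4.2 go through to give $\gamma_{\rm R}(G)=4n$ if and only if $\mathscr{C}$ is satisfiable. Your version is arguably cleaner: by keeping $G$ intact and setting $k=4n+2$, you may invoke Claims~4.1 and~4.2 literally, with no need to re-run any argument. The paper's version, on the other hand, yields a slightly smaller instance and shows that the path gadget was needed only for the bondage part of the argument, not for Roman domination itself. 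Either route is a valid polynomial reduction.
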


\begin{proof}
It is easy to see that the Roman domination problem is in NP since
a nondeterministic algorithm need only guess a vertex set pair
$(V_1,V_2)$ with $|V_1|+2|V_2|\le k$ and check in polynomial time
whether that for any vertex $u\in V\setminus(V_1\cup V_2)$ whether
there is a vertex in $V_2$ adjacent to $u$ for a given nonempty
graph $G$.

We use the same method as Theorem~\ref{thm4} to prove this
conclusion. We construct the same graph $G$ but does not contain the
path $P$. We set $k=4n$, then use the same methods as Claim 4.1 and
4.2, we have that $\gamma_{\rm R}(G)=4n$ if and only if
$\mathscr{C}$ is satisfiable.
\end{proof}

\section{General bounds}

\begin{lemma}\label{lem6}
Let $G$ be a connected graph of order $n\ge 3$ such that
$\gamma_{\rm R}(G)=\gamma(G)+1$. If there is a set $B$ of edges
with $\gamma_{\rm R}(G-B)=\gamma_{\rm R}(G)$, then
$\Delta(G)=\Delta(G-B)$.
\end{lemma}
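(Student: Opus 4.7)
My plan is to first pin down $\Delta(G)$ exactly, and then show that $G-B$ still carries a vertex of that degree by dissecting a minimum Roman dominating function. Since $G$ is connected and $\gamma_{\rm R}(G)=\gamma(G)+1$, Proposition~\ref{propE} supplies a vertex of degree $n-\gamma(G)$, so $\Delta(G)\ge n-\gamma(G)$; on the other hand Proposition~\ref{propA} applied to $G$ gives $\gamma(G)+1=\gamma_{\rm R}(G)\le n-\Delta(G)+1$, i.e.\ $\Delta(G)\le n-\gamma(G)$. Combining these, $\Delta(G)=n-\gamma(G)$, and the task reduces to producing a vertex of this degree in $G-B$.

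Next I would take an arbitrary $\gamma_{\rm R}$-function $f=(V_0,V_1,V_2)$ of $G-B$. Because $V_1\cup V_2$ dominates $G-B$ and $G-B$ is a spanning subgraph of $G$, we have
\[
|V_1|+|V_2|\ \ge\ \gamma(G-B)\ \ge\ \gamma(G),
\]
while the weight equation reads $|V_1|+2|V_2|=\gamma_{\rm R}(G-B)=\gamma(G)+1$. Subtracting the two relations forces $|V_2|\le 1$, and a routine count gives $|V_0|=n-\gamma(G)-1+|V_2|$.

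The crux is ruling out $V_2=\emptyset$. If it were empty, no vertex could receive label $0$, so $V_1=V(G)$ and $\gamma(G)=n-1$; but for a connected graph of order $n\ge 3$ we have $\Delta(G)\ge 2$, whence the standard estimate $\gamma(G)\le n-\Delta(G)\le n-2$, a contradiction. Hence $|V_2|=1$, and its unique element $v^*$ must be adjacent in $G-B$ to every vertex of $V_0$, yielding $\deg_{G-B}(v^*)\ge|V_0|=n-\gamma(G)=\Delta(G)$. Together with the trivial bound $\Delta(G-B)\le\Delta(G)$, this completes the proof. The only delicate point in the plan is the exclusion of $V_2=\emptyset$, which is precisely where the hypothesis $n\ge 3$ together with the connectedness of $G$ enter; once the elementary bound $\gamma(G)\le n-\Delta(G)$ is invoked, the rest is a direct unpacking of the Roman-domination definition.
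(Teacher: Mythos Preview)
Your proof is correct and takes a genuinely different route from the paper's. The paper argues more structurally: it first observes (via Propositions~\ref{propC} and~\ref{propD}) that $G-B$ is nonempty, so $\gamma_{\rm R}(G-B)\ge\gamma(G-B)+1$; combining this with $\gamma_{\rm R}(G-B)=\gamma(G)+1\le\gamma(G-B)+1$ yields both $\gamma(G-B)=\gamma(G)$ and $\gamma_{\rm R}(G-B)=\gamma(G-B)+1$. Proposition~\ref{propE} is then applied to $G-B$ if it is connected, or to its unique nonempty component (the other components being shown edgeless via Proposition~\ref{propD}) if not, producing a vertex of degree $n-\gamma(G)$. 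You instead dissect a $\gamma_{\rm R}$-function of $G-B$ directly: the count $|V_1|+2|V_2|=\gamma(G)+1$ against $|V_1|+|V_2|\ge\gamma(G-B)\ge\gamma(G)$ forces $|V_2|\le 1$, the case $|V_2|=0$ is excluded by $\gamma(G)\le n-\Delta(G)\le n-2$, and the single vertex in $V_2$ then visibly has degree at least $|V_0|=n-\gamma(G)$ in $G-B$. Your argument is more elementary and avoids the connected/disconnected case split entirely; the paper's approach, by contrast, extracts the stronger intermediate conclusion $\gamma_{\rm R}(G-B)=\gamma(G-B)+1$, which may be useful elsewhere.
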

\begin{proof}
Since $G$ is connected and $n\geq 3$, $\gamma_{\rm
R}(G)=\gamma(G)+1\leq n-1$. Since $\gamma_{\rm R}(G-B)=\gamma_{\rm
R}(G)\leq n-1$,  $G-B$ is nonempty. It follows from  Propositions
\ref{propC} and \ref{propD} that $\gamma_{\rm R}(G-B)\geq
\gamma(G-B)+1$. Since
$$\gamma_{\rm R}(G-B)=\gamma_{\rm R}(G)=\gamma(G)+1\leq\gamma(G-B)+1,$$
we have $\gamma_{\rm R}(G-B)=\gamma(G-B)+1$, and then
$\gamma(G-B)=\gamma(G)$.

If $G-B$ is connected, then by Proposition \ref{propE},
$$\Delta(G-B)=n-\gamma(G-B)=n-\gamma(G)=\Delta(G).$$

If $G-B$ is disconnected, then let $G_1$ be a nonempty connected
component of $G-B$. By Propositions \ref{propC} and \ref{propD},
$\gamma_{\rm R}(G_1)\geq \gamma(G_1)+1$. Then
$$\begin{array}{ccc}
\gamma(G)+1 &= &\gamma_{\rm R}(G-B)\hfill\\
&= & \gamma_{\rm R}(G_1)+\gamma_{\rm R}(G-G_1)\\
&\geq &\gamma(G_1)+1+\gamma(G-G_1)\hfill\\
&\geq & \gamma(G)+1,\hfill
\end{array}$$
and hence $\gamma_{\rm R}(G_1)=\gamma(G_1)+1$,  $\gamma_{\rm
R}(G-G_1)=\gamma(G-G_1)$ and $\gamma(G)=\gamma(G_1)+\gamma(G-G_1)$.
By Proposition \ref{propD}, $G-G_1$ is empty and hence
$\gamma(G-G_1)=|V(G-G_1)|$. By Proposition \ref{propE},
\begin{align*}
\Delta(G_1)&=|V(G_1)|-\gamma(G_1)\\
&=n-|V(G-G_1)|-\gamma(G_1)\\
&=n-\gamma(G-G_1)-\gamma(G_1)\\
&=n-\gamma(G)=\Delta(G)
\end{align*}
as desirable.
\end{proof}

\begin{theorem}\label{thm7}
Let $G$ be a connected graph of order $n\ge 3$ with $\gamma_{\rm
R}(G)=\gamma(G)+1$. Then $$b_{\rm R}(G)\leq \min
\{b(G),n_{\Delta}\},$$ where $n_{\Delta}$ is the number of
vertices with maximum degree $\Delta$ in $G$.
\end{theorem}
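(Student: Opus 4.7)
The plan is to establish the two bounds $b_{\rm R}(G)\le b(G)$ and $b_{\rm R}(G)\le n_\Delta$ separately, both by contrapositive application of Lemma \ref{lem6}. That lemma tells us: whenever an edge set $B$ satisfies $\gamma_{\rm R}(G-B)=\gamma_{\rm R}(G)$, the maximum degree is preserved, $\Delta(G-B)=\Delta(G)$; moreover its proof also yields $\gamma(G-B)=\gamma(G)$ under the same hypothesis. So to conclude that $B$ strictly increases $\gamma_{\rm R}$, it suffices to exhibit that $B$ strictly decreases either $\Delta$ or $\gamma$.

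For $b_{\rm R}(G)\le b(G)$, I would pick a $b(G)$-set $B$, so $\gamma(G-B)>\gamma(G)$. Edge removal never decreases $\gamma_{\rm R}$, so $\gamma_{\rm R}(G-B)\ge \gamma_{\rm R}(G)$; if equality held, then the inner argument of Lemma \ref{lem6} (using Propositions \ref{propC} and \ref{propD}) would force $\gamma(G-B)=\gamma(G)$, contradicting the choice of $B$. Hence $\gamma_{\rm R}(G-B)>\gamma_{\rm R}(G)$, which shows $B$ is a Roman bondage set and gives $b_{\rm R}(G)\le|B|=b(G)$.

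For $b_{\rm R}(G)\le n_\Delta$, I would enumerate the maximum-degree vertices as $v_1,\ldots,v_{n_\Delta}$, choose any edge $e_i$ incident to each $v_i$, and set $B=\{e_1,\ldots,e_{n_\Delta}\}$, so $|B|\le n_\Delta$. Each $v_i$ loses at least one incident edge, so $\deg_{G-B}(v_i)\le \Delta(G)-1$; every other vertex already had degree smaller than $\Delta(G)$ in $G$. Therefore $\Delta(G-B)<\Delta(G)$. If $\gamma_{\rm R}(G-B)=\gamma_{\rm R}(G)$ held, Lemma \ref{lem6} would force $\Delta(G-B)=\Delta(G)$, a contradiction. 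Using monotonicity of $\gamma_{\rm R}$ under edge removal once more, we conclude $\gamma_{\rm R}(G-B)>\gamma_{\rm R}(G)$, so $B$ is a Roman bondage set and $b_{\rm R}(G)\le n_\Delta$.

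I do not anticipate a substantial obstacle here, since both parts collapse into a one-line contradiction once Lemma \ref{lem6} is invoked. The only point that deserves explicit care is confirming that the hypothesis of Lemma \ref{lem6} actually delivers $\gamma(G-B)=\gamma(G)$ (not merely $\Delta(G-B)=\Delta(G)$); this is read off directly from the opening lines of that lemma's proof and should be quoted precisely in our write-up so the first bound $b_{\rm R}(G)\le b(G)$ is not left dangling.
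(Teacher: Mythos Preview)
Your proposal is correct and matches the paper's approach: the second bound $b_{\rm R}(G)\le n_\Delta$ is argued identically via Lemma~\ref{lem6}, and the first bound uses the same ingredients (Propositions~\ref{propC} and~\ref{propD}). The only difference is that the paper handles $b_{\rm R}(G)\le b(G)$ by a direct chain $\gamma_{\rm R}(G-B)\ge \gamma(G-B)+1>\gamma(G)+1=\gamma_{\rm R}(G)$ rather than by contradiction through the inner step of Lemma~\ref{lem6}, so you can avoid quoting from that proof altogether.
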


\begin{proof} Since $n\geq 3$ and $G$ is connected, we have
$\Delta(G)\geq 2$ and hence   $\gamma(G)\leq n-2$. Let $B$ be a
$b(G)$- set. By (\ref{eqb}), $\gamma(G-B)=\gamma(G)+1\leq n-1$ and
so $G-B$ is nonempty. It follows from Propositions \ref{propC} and
\ref{propD} that $\gamma_{\rm R}(G-B)\geq
\gamma(G-B)+1>\gamma(G)+1=\gamma_{\rm R}(G)$ and hence $B$ is a
Roman bondage set of $G$. Thus, $b_{\rm R}(G)\le b(G)$.

We now prove that $b_{\rm R}(G)\leq n_{\Delta}$. It follows from
Propositions \ref{propA}, \ref{propE} and the fact $\gamma_{\rm
R}(G) =\gamma(G)+1$ that $\Delta(G)=n-\gamma(G)$. Let
$\{v_1,\ldots,v_{n_{\Delta}}\}$ be the set consists of all vertices
of degree $\Delta$ and let $e_i$ be  an edge adjacent to $v_i$ for
each $1\le i\le n_{\Delta}$.  Suppose
$B'=\{e_1,\ldots,e_{n_{\Delta}}\}$. Clearly,
$\Delta(G-B')<\Delta(G)=n-\gamma(G)$ and $G-B'$ is nonempty. Since
$G-B'$ is nonempty, it follows from Propositions \ref{propC} and
\ref{propD} that $\gamma_{\rm R}(G-B')\geq \gamma(G-B')+1$. We claim
that $\gamma_{\rm R}(G-B')>\gamma_{\rm R}(G)$. Assume to the
contrary that  $\gamma_{\rm R}(G-B')=\gamma_{\rm R}(G)$. We deduce
from Lemma~\ref{lem6} that $\Delta(G-B')=\Delta(G)=n-\gamma(G)$, a
contradiction. Hence $b_{\rm R}(G)\leq |B'|\leq n_{\Delta}$. This
completes the proof.
\end{proof}

\begin{theorem}\label{thm8}
For every Roman graph $G$,
$$b_{\rm R}(G) \geq b(G).$$
The bound is sharp for cycles on $n$ vertices where $n\equiv 0\;({\rm
mod}\;3)$.
\end{theorem}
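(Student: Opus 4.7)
The plan is to show that every Roman bondage set is automatically a (classical) bondage set when $G$ is Roman, which immediately gives $b(G)\le b_R(G)$. Let $B$ be a $b_R(G)$-set, so by (\ref{eqrb}) and the hypothesis $\gamma_R(G)=2\gamma(G)$ we have
\[
\gamma_R(G-B)>\gamma_R(G)=2\gamma(G).
\]
Since removing edges cannot decrease the domination number, $\gamma(G-B)\ge\gamma(G)$. I would argue by contradiction: if equality $\gamma(G-B)=\gamma(G)$ held, then applying Proposition~\ref{propC} to $G-B$ would yield
\[
\gamma_R(G-B)\le 2\gamma(G-B)=2\gamma(G)=\gamma_R(G),
\]
contradicting the displayed strict inequality above. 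Hence $\gamma(G-B)>\gamma(G)$, i.e.\ $B$ is a bondage set of $G$, and the definition of $b(G)$ gives $b(G)\le|B|=b_R(G)$.

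For the sharpness claim I would exhibit $C_n$ with $n\equiv 0\,({\rm mod}\,3)$. Here $\gamma(C_n)=n/3$ and $\gamma_R(C_n)=2n/3$, so $C_n$ is Roman; and Proposition~\ref{propH} gives $b(C_n)=b_R(C_n)=2$ (since $n\not\equiv 1\,({\rm mod}\,3)$ and $n\not\equiv 2\,({\rm mod}\,3)$), witnessing equality in the bound.

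There is no real obstacle beyond spotting that the Roman hypothesis is exactly what converts the general inequality $\gamma_R\le 2\gamma$ into the implication ``\,$\gamma_R(G-B)>\gamma_R(G)\Rightarrow\gamma(G-B)>\gamma(G)$''; the rest is a one-line contrapositive together with citing Propositions~\ref{propC} and~\ref{propH}.
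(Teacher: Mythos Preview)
Your proof is correct and follows essentially the same route as the paper: take a $b_R(G)$-set $B$, use $\gamma_R(G-B)>\gamma_R(G)=2\gamma(G)$ together with $\gamma_R(G-B)\le 2\gamma(G-B)$ to force $\gamma(G-B)>\gamma(G)$, hence $B$ is a bondage set and $b(G)\le b_R(G)$. The paper writes this as a single chain of inequalities rather than a contradiction, and for sharpness simply cites Proposition~\ref{propH}; your version is slightly more explicit (in particular you verify that $C_n$ is Roman when $3\mid n$), but the substance is identical.
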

\begin{proof}
Let $B$ be a $b_R(G)$- set. Then by (\ref{eqq}) we have
$$2\gamma(G-B)\ge \gamma_{\rm R}(G-B)>\gamma_{\rm R}(G)=2\gamma(G).$$
Thus $\gamma(G-B)>\gamma(G)$ and hence $b_{\rm R}(G) \geq b(G)$.

By Proposition~\ref{propH}, we have $b_{\rm R}(C_n) \geq b(C_n)=2$
when $n\equiv 0\;({\rm mod}\;3)$.
\end{proof}

The strict inequality in Theorem~\ref{thm8} can hold, for example,
$b(C_{3k+2})=2<3=b_{\rm R}(C_{3k+2})$ by Proposition~\ref{propH}.

A graph $G$ is called to be {\it vertex domination-critical} (
{\it vc-graph} for short) if $\gamma(G-x) < \gamma(G)$ for any
vertex $x$ in $G$. We call  a graph $G$ to be {\it vertex Roman
domination-critical} ({\it vrc-graph} for short) if $\gamma_{\rm
R}(G-x)<\gamma_{\rm R}(G)$ for every vertex $x$ in $G$.

The {\it vertex covering number} $\beta(G)$ of $G$ is the minimum
number of vertices that are incident with all edges in $G$. If $G$
has no isolated vertices, then $\gamma_{\rm R}(G)\leq 2\gamma(G)
\leq 2\beta(G)$. If $\gamma_{\rm R}(G)=2\beta(G)$, then
$\gamma_{\rm R}(G)=2\gamma(G)$ and hence $G$ is a Roman graph.
In~\cite{v94}, Volkmann gave a lot of graphs with
$\gamma(G)=\beta(G)$.

\begin{theorem}\label{thm9}
Let $G$ be a graph with $\gamma_{\rm R}(G)=2\beta(G)$. Then\\
(1) $b_{\rm R}(G)\geq \delta(G)$;\\
(2) $b_{\rm R}(G)\geq \delta(G)+1$ if $G$ is a vrc-graph.
\end{theorem}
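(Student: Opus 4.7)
My plan is to start from a $b_{\mathrm R}(G)$-set $B$ satisfying $\gamma_{\mathrm R}(G-B)>\gamma_{\mathrm R}(G)=2\beta(G)$, and in both parts argue by contradiction that $|B|$ must exceed the stated threshold. The main tool is the chain of inequalities $\gamma_{\mathrm R}(H)\le 2\gamma(H)\le 2\beta(H)$, which holds as long as $H$ has no isolated vertex (since an isolated vertex forces extra weight $1$ in any RDF and is not covered by a vertex cover). Together with the easy observation that $\beta(G-B)\le\beta(G)$ (any vertex cover of $G$ still covers the subgraph $G-B$), this will squeeze $\gamma_{\mathrm R}(G-B)$ between $2\beta(G)$ and itself.

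For part (1), I would assume $|B|<\delta(G)$. Then for every vertex $v$ we have $\deg_{G-B}(v)\ge\delta(G)-|B|>0$, so $G-B$ has no isolated vertex. Applying the chain above and the monotonicity of $\beta$, I obtain
\[
\gamma_{\mathrm R}(G-B)\le 2\beta(G-B)\le 2\beta(G)=\gamma_{\mathrm R}(G),
\]
contradicting the defining property of a Roman bondage set. Hence $b_{\mathrm R}(G)=|B|\ge\delta(G)$.

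For part (2), I would assume $|B|=\delta(G)$ and split into two cases. If $G-B$ has no isolated vertex, the same display as in (1) yields the same contradiction. If $G-B$ has an isolated vertex $v$, then all $\deg_G(v)$ edges incident to $v$ lie in $B$, forcing $\deg_G(v)\le|B|=\delta(G)$ and hence $\deg_G(v)=\delta(G)$ with $B$ being exactly the edges at $v$. Then $G-B$ is the disjoint union of $G-v$ with the isolated vertex $v$, so
\[
\gamma_{\mathrm R}(G-B)=\gamma_{\mathrm R}(G-v)+1.
\]
Invoking the vrc-property $\gamma_{\mathrm R}(G-v)<\gamma_{\mathrm R}(G)$, I get $\gamma_{\mathrm R}(G-B)\le\gamma_{\mathrm R}(G)$, again a contradiction. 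So $b_{\mathrm R}(G)\ge\delta(G)+1$.

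The only delicate step is the bookkeeping in the isolated-vertex case of (2): I must argue that an isolated vertex of $G-B$ must be a $\delta$-vertex whose entire edge-neighborhood is $B$, so that removing $B$ behaves exactly like deleting the vertex $v$ (plus paying the unavoidable weight $1$ for it). Once that identification is made, the vrc-hypothesis does precisely the right amount of work, and nothing more than Propositions~\ref{propC} and the definition of $\beta$ is needed elsewhere.
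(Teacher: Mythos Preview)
Your proof is correct and follows essentially the same approach as the paper: both use the chain $\gamma_{\rm R}(H)\le 2\gamma(H)\le 2\beta(H)$ (valid when $H$ has no isolated vertex) together with $\beta(G-B)\le\beta(G)$ to handle the non-isolated case, and then invoke the vrc hypothesis via $\gamma_{\rm R}(G-E_v)=\gamma_{\rm R}(G-v)+1$ to dispose of the isolated-vertex case in part~(2). Your write-up is in fact more explicit than the paper's in justifying why the isolated vertex must be a $\delta$-vertex with $B$ exactly its incident edges, a step the paper leaves implicit.
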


\begin{proof}
Let $G$ be a graph such that $\gamma_{\rm R}(G)=2\beta(G)$.

(1)  If $\delta(G)=1$, then the result is immediate. Assume
$\delta(G)\geq 2$. Let $B\subseteq E(G)$ and $|B|\leq
\delta(G)-1$. Then $\delta(G-B)\geq 1$ and so $\gamma_{\rm
R}(G)\leq \gamma_{\rm R}(G-B) \leq 2\beta(G-B) \leq 2
\beta(G)=\gamma_{\rm R}(G)$. Thus, $B$ is not a Roman bondage set
of $G$, and hence $b_{\rm R}(G)\geq \delta(G)$.

(2)Let $B$ be a Roman bondage set of $G$. An argument similar to
that described in the proof of (1), shows that $B$ must contain
all edges incident with some vertex of $G$, say $x$. Hence, $G-B$
has an isolated vertex. On the other hand, since $G$ is a
vrc-graph, $\gamma_{\rm R}(G-x) <\gamma_{\rm R}(G)$ which implies
that the removal of all edges incident to $x$ can not increase the
Roman domination number. Hence, $b_{\rm R}(G)\geq \delta(G)+1$.
\end{proof}
The {\em cartesian product} $G=G_1\times G_2$ of two disjoint
graphs $G_1$ and $G_2$ has $V(G)=V(G_1)\times V(G_2)$, and two
vertices $(u_1,u_2)$ and $(v_1,v_2)$ of $G$ are adjacent if and
only if either $u_1=v_1$ and $u_2v_2\in E(G_2)$ or $u_2=v_2$ and
$u_1v_1\in E(G_1)$. The cartesian product of two paths
$P_r=x_1x_2\ldots x_r$ and $P_t=y_1y_2\ldots y_t$ is called a {\em
grid}. Let $G_{r,s}=P_r\times P_t$ is a grid, and let
$V(G_{r,s})=\{u_{i,j}=(x_i,y_j)|1\le i\le r\,\,{\rm and}\,\,1\le
j\le t\}$ be the vertex set of $G$. Next we determine Roman
bondage number of grids.

\begin{theorem}\label{thm10}
For $n\ge 2$, $b_{\rm R}(G_{2,n})=2$.
\end{theorem}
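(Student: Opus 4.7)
The plan is to prove $b_R(G_{2,n})\leq 2$ and $b_R(G_{2,n})\geq 2$ separately. The case $n=2$ is immediate since $G_{2,2}\cong C_4$ and Lemma \ref{propH} gives $b_R(C_4)=2$, so I focus on $n\ge 3$.

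For the upper bound, I would delete the two horizontal edges incident to the first column, namely $B=\{u_{1,1}u_{1,2},\, u_{2,1}u_{2,2}\}$. The graph $G_{2,n}-B$ splits into two connected components: a $K_2$ on $\{u_{1,1},u_{2,1}\}$ and the smaller grid $P_2\times P_{n-1}$ on the remaining $2(n-1)$ vertices. Since $\gamma_R$ is additive over disjoint unions, and a direct check gives $\gamma_R(K_2)=2$ while $\gamma_R(P_2\times P_{n-1})=n$ by Lemma \ref{propB}, we obtain $\gamma_R(G_{2,n}-B)=n+2>n+1=\gamma_R(G_{2,n})$. Hence $B$ is a Roman bondage set, showing $b_R(G_{2,n})\le 2$.

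For the lower bound I must show that removing any single edge $e$ leaves the Roman domination number unchanged, that is, $\gamma_R(G_{2,n}-e)\le n+1$. Using the two natural symmetries of the grid---horizontal reflection $u_{1,j}\leftrightarrow u_{2,j}$ and the column-swap $u_{i,j}\leftrightarrow u_{i,n+1-j}$---the edges fall into two types up to symmetry: a vertical rung $u_{1,j}u_{2,j}$ with $1\le j\le\lceil n/2\rceil$, and a horizontal edge $u_{1,j}u_{1,j+1}$ with $1\le j\le\lfloor n/2\rfloor$. For each representative $e$, I would explicitly exhibit a Roman dominating function on $G_{2,n}-e$ of weight $n+1$, typically starting from a ``zigzag'' $\gamma_R$-function that places weight $2$ at the alternating vertices $u_{1,2},u_{2,4},u_{1,6},\ldots$ and weight $1$ at any uncovered corner, and then locally perturbing the placement near $e$ so that no $0$-labeled vertex loses its unique weight-$2$ neighbor.

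The main obstacle is the bookkeeping in this lower-bound case analysis: the precise form of the RDF depends on the residue of $n$ modulo a small number and on whether $e$ lies at the boundary columns $1,n$ or in the interior, and I must separately verify the rung case and the horizontal case. The guiding invariant is that $G_{2,n}$ admits many $\gamma_R$-functions of weight $n+1$, and for each fixed $e$ at least one of them does not rely on $e$ as the sole witness for a label-$2$ domination of a $0$-labeled vertex; assembling these witnesses for all edges is the bulk of the work.
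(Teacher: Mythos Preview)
Your upper bound argument is identical to the paper's. Your lower bound plan is sound and rests on the same idea the paper uses: exhibit enough $\gamma_R$-functions of weight $n+1$ so that every edge is avoidable by at least one of them. The difference is organizational. You propose to reduce by symmetry to two edge types and then, for each representative $e$, \emph{perturb} a single zigzag function locally near $e$. The paper instead writes down, for each residue of $n$ modulo $4$, three fixed global $\gamma_R$-functions $f_1,f_2,f_3$ (or $f_1,f_2,f_4$) --- essentially phase-shifted and row-swapped zigzags --- and simply observes that for any edge $e$, at least one of the three already remains an RDF of $G_{2,n}-e$. That avoids the edge-by-edge perturbation and the boundary/interior case split you anticipate; the bookkeeping collapses to checking that no edge is critical for all three patterns simultaneously. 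Your route would work, but the paper's packaging is shorter.
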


\begin{proof}
By Proposition \ref{propB}, we have $\gamma_{\rm R}(G_{2,n})=n+1$.
Since
$$\gamma_{\rm R}(G_{2,n}-u_{1,1}u_{1,2}-u_{2,1}u_{2,2})=2+\gamma_{\rm R}(G_{2,n-1})=n+2,$$
we deduce that $b_{\rm R}(G_{2,n})\leq 2$. Now we show that
$\gamma_{\rm R}(G_{2,n}-e)= \gamma_{\rm R}(G_{2,n})$ for any edge
$e\in E(G_{2,n})$. Consider two cases.

\smallskip
\noindent {\bf Case 1}\quad $n$ is odd.\\
For $i=1,2,3,4$, define $f_i:V(G_{2,n})\rightarrow \{0,1,2\}$ as
follows:
$$f_1(u_{i,j})=\left\{\begin{array}{ccc}
  2 & {\rm if} & i=1 \;{\rm and}\; j\equiv 1\;({\rm mod}\;4)\;\;{\rm or}\;\; i=2 \;{\rm and}\; j\equiv 3\;({\rm mod}\;4)\\
  0 & {\rm if} & {\rm otherwise},\hfill
\end{array}\right.$$

$$f_2(u_{i,j})=\left\{\begin{array}{ccc}
  2 & {\rm if} & i=1 \;{\rm and}\; j\equiv 3\;({\rm mod}\;4)\;\;{\rm or}\;\; i=2 \;{\rm and}\; j\equiv 1\;({\rm mod}\;4)\\
  0 & {\rm if} & {\rm otherwise},\hfill
\end{array}\right.$$ and if $n\equiv 1\;({\rm mod}\;4) $, then

$$f_3(u_{i,j})=\left\{\begin{array}{ccc}
  2 & {\rm if} & i=1 \;{\rm and}\; j\equiv 0\;({\rm mod}\;4)\;\;{\rm or}\;\; i=2 \;{\rm and}\; j\equiv 2\;({\rm mod}\;4)\\
  1 & {\rm if} & i=j=1 \;\;{\rm or}\;\;i=2 \;{\rm and}\; j=n \hfill\\
  0 & {\rm if} & {\rm otherwise}.\hfill
\end{array}\right.$$

and if $n\equiv 3\;({\rm mod}\;4) $, then

$$f_4(u_{i,j})=\left\{\begin{array}{ccc}
  2 & {\rm if} & i=1 \;{\rm and}\; j\equiv 2\;({\rm mod}\;4)\;\;{\rm or}\;\; i=2 \;{\rm and}\; j\equiv 0\;({\rm mod}\;4)\\
  1 & {\rm if} & i=2 \;{\rm and}\; j=1\;\;{\rm or}\;\;i=2 \;{\rm and}\; j=n\hfill\\
  0 & {\rm if} & {\rm otherwise}.\hfill
\end{array}\right.$$

Obviously, $f_i$ is a $\gamma_R(G_{2,n})$-function for each
$i=1,2,3$ when $n\equiv 1\;({\rm mod}\;4)$ and $f_i$ is a
$\gamma_R(G_{2,n})$-function for each $i=1,2,4$ when $n\equiv
3\;({\rm mod}\;4)$. Let  $e\in E(G)$ be an arbitrary edge of $G$.
Then clearly , $f_1$ or $f_2$ or $f_3$ is a Roman dominating
function of $G-e$ if $n\equiv 1\;({\rm mod}\;4) $ and $f_1$ or
$f_2$ or $f_3$ is a Roman dominating function of $G-e$ if $n\equiv
3\;({\rm mod}\;4) $. Hence $b_R(G_{2,n})\ge 2$.

\smallskip
\noindent {\bf Case 2}\quad $n$ is even.\\
For $i=1,2,3,4$, define $f_i:V(G_{2,n})\rightarrow \{0,1,2\}$ as
follows:
$$f_1(u_{i,j})=\left\{\begin{array}{ccc}
  2 & {\rm if} & i=1 \;{\rm and}\; j\equiv 0\;({\rm mod}\;4)\;\;{\rm or}\;\; i=2 \;{\rm and}\; j\equiv 2\;({\rm mod}\;4)\\
  1 & {\rm if} & i=j=1\hfill\\
  0 & {\rm if} & {\rm otherwise},\hfill
\end{array}\right.$$

$$f_2(u_{i,j})=\left\{\begin{array}{ccc}
  2 & {\rm if} & i=1 \;{\rm and}\; j\equiv 2\;({\rm mod}\;4)\;\;{\rm or}\;\; i=2 \;{\rm and}\; j\equiv 0\;({\rm mod}\;4)\\
  1 & {\rm if} & i=2 \;{\rm and}\; j=1\hfill\\
  0 & {\rm if} & {\rm otherwise}.\hfill
\end{array}\right.$$ and if $n\equiv 0\;({\rm mod}\;4)$, then

$$f_3(u_{i,j})=\left\{\begin{array}{ccc}
  2 & {\rm if} & i=1 \;{\rm and}\; j\equiv 1\;({\rm mod}\;4)\;\;{\rm or}\;\; i=2 \;{\rm and}\; j\equiv 3\;({\rm mod}\;4)\\
  1 & {\rm if} & i=1 \;{\rm and}\; j=n\hfill \\
  0 & {\rm if} & {\rm otherwise},\hfill
\end{array}\right.$$ and if $n\equiv 2\;({\rm mod}\;4)$, then

$$f_4(u_{i,j})=\left\{\begin{array}{ccc}
  2 & {\rm if} & i=1 \;{\rm and}\; j\equiv 1\;({\rm mod}\;4)\;\;{\rm or}\;\; i=2 \;{\rm and}\; j\equiv 3\;({\rm mod}\;4)\\
  1 & {\rm if} & i=2 \;{\rm and}\; j=n\hfill \\
  0 & {\rm if} & {\rm otherwise},\hfill
\end{array}\right.$$

Obviously, $f_i$ is a $\gamma_R(G_{2,n})$-function for each
$i=1,2,3$ when $n\equiv 0\;({\rm mod}\;4)$ and $f_i$ is a
$\gamma_R(G_{2,n})$-function for each $i=1,2,4$ when $n\equiv
2\;({\rm mod}\;4)$. Let  $e\in E(G)$ be an arbitrary edge of $G$.
Then clearly , $f_1$ or $f_2$ or $f_3$ is a Roman dominating
function of $G-e$ if $n\equiv 0\;({\rm mod}\;4) $ and $f_1$ or
$f_2$ or $f_4$ is a Roman dominating function of $G-e$ if $n\equiv
2\;({\rm mod}\;4) $. Hence $b_R(G_{2,n})\ge 2$. This completes the
proof.
\end{proof}

\section{Roman bondage number of graphs with small Roman domination number}
Dehgardi, Sheikholeslami and Volkmann \cite{DSV} posed the
following problem: If $G$ is a connected graph of order $n\ge 4$
with Roman domination number $\gamma_R(G)\ge 3$, then
\begin{equation}\label{eqqq}b_{R}(G)\le (\gamma_R(G)-2)\Delta(G).\end{equation}
Theorem \ref{propG} shows that the inequality (\ref{eqqq}) holds if
$\gamma_R(G)\ge 5$. Thus the bound in (\ref{eqqq}) is of interest
only when $\gamma_R(G)$ is 3 or 4. In this section we prove
(\ref{eqqq}) for all graphs $G$ of order $n\ge 4$ with
$\gamma_R(G)=3,4$, improving Proposition \ref{propG}.

\begin{theorem}\label{thm11}
If $G$ is a connected graph of order $n\ge 4$ with
$\gamma_R(G)=3$, then
$$b_{R}(G)\le\Delta(G)=n-2.$$
\end{theorem}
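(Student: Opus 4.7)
The first move is to apply Lemma~\ref{propG}, which under the hypothesis $\gamma_{R}(G)=3$ forces $\Delta(G)=n-2$, so it remains to exhibit an edge set $B\subseteq E(G)$ with $|B|\le n-2$ and $\gamma_{R}(G-B)\ge 4$. My guiding reduction is the equivalence: $\gamma_{R}(G-B)\ge 4$ precisely when $\Delta(G-B)\le n-3$. The forward direction is immediate; for the reverse, applying Observation~\ref{ob1} and Lemma~\ref{propG} to $G-B$ (noting that an edgeless $G-B$ has $\gamma_{R}(G-B)=n\ge 4$) shows that any $G-B$ with $\Delta(G-B)\ge n-2$ already satisfies $\gamma_{R}(G-B)\le 3$. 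Thus it suffices to delete a small set of edges that touches every maximum-degree vertex of $G$.

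Let $S=\{v\in V(G):\deg_{G}(v)=n-2\}$, which is nonempty because $\Delta(G)=n-2$. The key structural observation is that each $v\in S$ has a unique non-neighbor in $G$, so the set of non-edges inside $G[S]$ forms a (possibly empty) matching. Hence $G[S]$ is $K_{|S|}$ minus a matching, and I claim it contains a matching $M$ of size $\lfloor|S|/2\rfloor$. For $|S|\ge 4$ this follows from Dirac's theorem, since $\delta(G[S])\ge|S|-2\ge|S|/2$ gives a Hamilton cycle whose alternate edges form the desired matching; the cases $|S|\in\{1,2,3\}$ are handled by direct inspection.

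Next I take $B=M\cup\{e_{v}:v\in S\text{ not covered by }M\}$, where each $e_{v}$ is an arbitrary edge of $G$ incident to $v$ (available since $\deg_{G}(v)=n-2\ge 2$). A short computation gives $|B|=|S|-|M|$, which is at most $\lceil|S|/2\rceil\le\lceil n/2\rceil\le n-2$ when $|S|\ge 3$, and at most $2\le n-2$ when $|S|\le 2$. By construction every vertex of $S$ loses at least one incident edge, so $\Delta(G-B)\le n-3$, and the opening reduction yields $\gamma_{R}(G-B)\ge 4>\gamma_{R}(G)$. Therefore $b_{R}(G)\le|B|\le n-2$. The main hurdle is the matching claim in the second paragraph, whose proof exploits the rigid ``$K_{|S|}$ minus a matching'' structure forced by the hypothesis $\gamma_{R}(G)=3$; once that is established, the rest of the argument is straightforward bookkeeping.
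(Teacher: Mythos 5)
Your argument is correct and shares the paper's backbone: both proofs rest on the characterization $\gamma_R(H)=3\Leftrightarrow\Delta(H)=n-2$ (Lemma~\ref{propG}) and reduce the theorem to deleting at most $n-2$ edges so that no vertex of degree $n-2$ survives. The difference is in how the deleted set is built. The paper takes a maximum matching $M$ of $G$ together with one extra edge per $M$-unsaturated vertex, so that \emph{every} vertex loses an edge, and counts $n-|M|\le n-2$ using $|M|\ge 2$. You hit only the set $S$ of degree-$(n-2)$ vertices, using the observation that each such vertex has a unique non-neighbour, so $G[S]$ is $K_{|S|}$ minus a matching, and then pairing up $S$ by a large matching of $G[S]$ plus single edges for leftovers. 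Your construction is more economical — it actually yields $b_R(G)\le\max\{2,\lceil n_\Delta/2\rceil\}$, which can be much smaller than $n-2$ — at the price of the structural claim about $G[S]$ and an appeal to Dirac's theorem, both of which the paper's matching argument avoids.

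Two blemishes, neither fatal. First, the claim that $G[S]$ contains a matching of size $\lfloor|S|/2\rfloor$, ``handled by direct inspection'' for small $|S|$, is false when $|S|=2$ and the two maximum-degree vertices are each other's unique non-neighbours (e.g.\ $G=K_{2,3}$: $S$ is the part of size two and $G[S]$ is edgeless). Your bookkeeping happens not to use the claim there, since for $|S|\le 2$ you bound $|B|\le 2\le n-2$ directly, but the statement should be repaired (take $M$ to be a maximum matching of $G[S]$ and treat $|S|\le 2$ separately, as your count already does). Second, in the opening equivalence $\gamma_R(G-B)\ge 4\Leftrightarrow\Delta(G-B)\le n-3$, what you offer for the ``reverse'' direction — that $\Delta(G-B)\ge n-2$ forces $\gamma_R(G-B)\le 3$ — is the contrapositive of the \emph{forward} implication. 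The direction you actually use, $\Delta(G-B)\le n-3\Rightarrow\gamma_R(G-B)\ge 4$, does follow from the cited facts: if $G-B$ is edgeless then $\gamma_R(G-B)=n\ge 4$; otherwise Lemma~\ref{propG} excludes the value $3$, and the value $2$ is excluded either by monotonicity of $\gamma_R$ under edge deletion ($\gamma_R(G-B)\ge\gamma_R(G)=3$) or because $\gamma_R=2$ forces a vertex of degree $n-1$. Note also that Observation~\ref{ob1} is stated for connected graphs while $G-B$ need not be connected, so the monotonicity route is the cleaner one at this step.
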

\begin{proof}
Let $\gamma_R(G)=3$. Then $\Delta(G)=n-2$ by Proposition
\ref{propG}. Let $M$ be maximum matching of $G$ and let $U$ be the
set consisting of unsaturated vertices. Since $G$ is connected and
$\gamma_R(G)=3$, we deduce that $|M|\ge 2$.

If $U=\emptyset$, then $G-M$ has no vertex of degree $n-2$ and it
follows from Proposition \ref{propG} that $\gamma_R(G-M)\ge 4$. Thus
\begin{equation}\label{eq1}b_R(G)\le |M|\le \frac{n}{2}\le n-2=\Delta(G).\end{equation}

Assume now that $U\neq\emptyset$. Clearly $U$ is an independent set.
Since $G$ is connected and $M$ is maximum, there exist a set $J$ of
$|U|$ edges such that each vertex of $U$ is incident with exactly
one edge of $J$. Then $|J|=|U|=n-2|M|$. Now let $F=J\cup M$.
Obviously, $G-F$ has no vertex of degree $n-2$, and it follows from
Proposition \ref{propG} that $\gamma_R(G_F)\ge 4$. This implies that
\begin{equation}\label{eq2}b_R(G)\le |M|+|U|=n-|M|\le n-2=\Delta(G).\end{equation}
This completes the proof.
\end{proof}

Next we characterize all graphs that achieve the bound in Theorem
\ref{thm11}.
\begin{theorem}\label{thm12}
If equality holds in Theorem \ref{thm11}, then $G$ is regular.
\end{theorem}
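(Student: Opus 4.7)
The plan is to argue by contradiction: assume $b_R(G)=n-2$ and $\gamma_R(G)=3$, but $G$ is not regular, and exhibit a Roman bondage set of size strictly less than $n-2$. By Lemma~\ref{propG}, $\gamma_R(G)=3$ forces $\Delta(G)=n-2$. Let $S=\{v\in V(G):\deg_G(v)=n-2\}$ and suppose toward a contradiction that $|S|\le n-1$.

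The key structural observation is that every $v\in S$ has a unique non-neighbor in $G$, so the non-neighbor pairs with both endpoints in $S$ form a matching $M^{*}$ in $\overline{G[S]}$, and consequently $G[S]=K_{|S|}-M^{*}$. For $|S|\ge 4$, every vertex of $G[S]$ has degree at least $|S|-2\ge |S|/2$, and Dirac's theorem yields a Hamilton cycle and hence a matching of size $\lfloor|S|/2\rfloor$ in $G[S]$; the case $|S|=3$ is immediate. I then build $B\subseteq E(G)$ by taking a maximum matching $M$ of $G[S]$ and adding, for each vertex of $S$ not saturated by $M$, a single edge of $G$ incident to it (which exists since each vertex of $S$ has degree $n-2\ge 2$). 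This produces an edge set incident to every vertex of $S$ with $|B|=|S|-\mu(G[S])=\lceil|S|/2\rceil$ (for $|S|\ge 3$). Since vertices of $V\setminus S$ already have degree at most $n-3$ in $G$, it follows that $\Delta(G-B)\le n-3$. Lemma~\ref{propG} together with Observation~\ref{ob1} then gives $\gamma_R(G-B)\ge 4>\gamma_R(G)$, so $B$ is a Roman bondage set of $G$.

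It remains to verify $|B|<n-2$. From $|S|\le n-1$ one gets $\lceil|S|/2\rceil\le\lceil(n-1)/2\rceil<n-2$ whenever $n\ge 5$. The cases $|S|\in\{1,2\}$ give $|B|\le 2$, which is also strictly less than $n-2$ for $n\ge 5$. For $n=4$ the degree-sum parity eliminates $|S|=3$; the case $|S|\le 1$ forces $G$ to be disconnected; the case $|S|=2$ with non-adjacent vertices of $S$ is impossible because it would push the remaining two vertices into $S$; and the realizable case ($G=P_4$, $|S|=2$ adjacent) gives $|B|=1<2=n-2$. In every scenario $b_R(G)\le|B|<n-2$, contradicting the assumption, so $|S|=n$ and $G$ is $(n-2)$-regular.

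The main obstacle is the careful handling of the boundary cases ($n=4$ and $|S|\le 2$); once the structural fact $G[S]=K_{|S|}-M^{*}$ is in place, Dirac's theorem delivers the required near-perfect matching and the rest reduces to an arithmetic check.
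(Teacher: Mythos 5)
Your proof is correct, but it goes by a genuinely different route than the paper's. The paper takes a maximum matching $M$ of the whole graph $G$, splits on whether $M$ is perfect, and then grinds through cases on the set of $M$-unsaturated vertices ($|X|=1$, $|X|\ge 2$, with further subcases on $\delta$), in the course of which it essentially identifies the extremal graph $C_4$. You instead localize to the set $S$ of vertices of degree $\Delta=n-2$, observe that each such vertex has a unique non-neighbour so that $G[S]=K_{|S|}-M^{*}$ for a matching $M^{*}$, extract a near-perfect matching of $G[S]$ (Dirac is overkill but valid; one can also pair the vertices of $S$ directly around the $M^{*}$-pairs), and cover $S$ by $\lceil |S|/2\rceil$ edges whose deletion drops the maximum degree below $n-2$; the characterization $\gamma_{\rm R}=3\Leftrightarrow\Delta=n-2$ (Lemma~\ref{propG}) then makes this a Roman bondage set, and the arithmetic $\lceil |S|/2\rceil<n-2$ forces $S=V(G)$ except for the small $n=4$ cases, which you dispose of correctly. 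What each approach buys: the paper's case analysis already pins down much of the structure used again in Theorem~\ref{thm13}, while your argument is shorter, confines the delicate analysis to $n=4$, and in fact yields the sharper quantitative statement $b_{\rm R}(G)\le\lceil n_{\Delta}/2\rceil$ (for $n_{\Delta}\ge 3$) for graphs with $\gamma_{\rm R}(G)=3$, refining Theorem~\ref{thm11} in the spirit of Theorem~\ref{thm7}. One small repair: to conclude $\gamma_{\rm R}(G-B)\ge 4$ you should not lean on Observation~\ref{ob1}, which is stated for connected graphs and $G-B$ may be disconnected (or even edgeless); instead use that $\gamma_{\rm R}$ cannot decrease under edge deletion, so $\gamma_{\rm R}(G-B)\ge\gamma_{\rm R}(G)=3$, and then Lemma~\ref{propG} (valid for any nonempty graph) excludes the value $3$ since $\Delta(G-B)\le n-3$, while an edgeless $G-B$ has $\gamma_{\rm R}=n\ge 4$; likewise, in the $n=4$, $|S|=3$ case the parity argument forces the fourth vertex to have degree $0$, and it is connectivity that finishes that case.
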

\begin{proof}
Let $\gamma_R(G)=3$ and $b_R(G)=\Delta(G)=n-2$. If $G$ has a
perfect matching $M$, then it follows from (\ref{eq1}) that
$\frac{n}{2}=n-2$ and hence $n=4$. This implies that
$b_R(G)=|M|=2=\Delta(G)$. Since $b_R(P_4)=1$, we have $G=C_4$ as
desired.

Let $G$ does not have a perfect matching and let $M$ be a maximum
matching of $G$. It follows from (\ref{eq2}) that $|M|=2$. Let $X$
be the independent set of $M$-unsaturated vertices. We consider
two cases.

\smallskip
\noindent {\bf Case 1.} $|X|=1$.\\ Then  $n=5$. Let
$V(G)=\{v_1,\ldots,v_5\}$. Since $\gamma_R(G)=3$, $\Delta(G)=n-2=3$
by Proposition \ref{propG}. Since $n$ is odd, $G$ has a vertex of
even degree 2. Let $\deg(v_1)=2$ and let $v_1v_2, v_1v_3\in E(G)$.
Since $b_R(G)=3>\deg(v_1)$, we have
$\gamma_R(G-v_1)=\gamma_R(G)-1=2$. By Observation \ref{ob1},
$\Delta(G-v_1)=3$. Since $\gamma_R(G)=3$, we may assume without loss
of generality that $\deg(v_4)=3$ and $\{v_4v_2,
v_4v_3,v_4v_5\}\subseteq E(G)$. Let $F=\{v_1v_2, v_3v_4\}$. Since
$b_R(G)=3>|F|$, we have $\gamma_R(G-F)=3$. It follows from
Proposition \ref{propG} and the fact $\gamma_R(G-F)=3$ that
$\deg_{G-F}(v_5)=3$. This implies that
$\{v_5v_2,v_5v_3,v_5v_4\}\subseteq E(G)$. Thus
$E(G)=\{v_1v_2,v_1v_3,v_2v_4,v_2v_5,v_3v_4,v_3v_5,v_4v_5\}$. Now we
have $G-\{v_2v_4,v_3v_5\}\simeq C_5$ and hence
$\gamma_R(G-\{v_2v_4,v_3v_5\})=4$. This implies that $b_R(G)\le 2$ a
contradiction.

\smallskip
\noindent {\bf Case 2.}\quad $|X|\ge 2$. \\ Then $n\ge 6$. Let
$M=\{u_1v_1, u_2v_2\}$ be a maximum matching of $G$. If $y$ and $z$
are vertices of $X$ and $yu_i\in E(G)$, then since the matching $M$
is maximum, $zv_i\notin E(G)$. Therefore, we may assume without loss
of generality that $N_G(X)\subseteq \{u_1,u_2\}$. So
$\deg(y)+\deg(z)\le 4$ for every pair of distinct vertices $y$ and
$z$ in $X$. Let $y,z\in X$ and $F$ be the set of edges incident with
$y$ or $z$. Then $y,z$ are isolated vertices in $G-F$ and hence
$\gamma_R(G-F)\ge 4$. If $|F|\le 3$, then $n-2=b_R(G)\le 3$ which
leads to a contradiction. Therefore, $|F|=4$. It follows that
$n-2=b_R(G)\le 4$ and hence $n= 6$. Let
$V(G)=\{u_1,u_2,v_1,v_2,y,z\}$. Then $\deg(y)=\deg(z)=2$ and
$\deg(u_1),\deg(u_2)\ge 3$. If $v_1v_2\in E(G)$, then
$\{yu_1,xu_2,v_1v_2\}$ is a matching of $G$ which is a
contradiction. Thus $\deg(v_1),\deg(v_2)\le 2$. Since
$\gamma_R(G)=3$, $\Delta(G)=n-2=4$ by Proposition \ref{propG}. We
distinguish two subcases.

\smallskip
{\bf Subcase 2.1}\quad $\delta(G)=1$. \\ Assume  without loss of
generality that $\deg(v_1)=1$. Let $F$ be the set of edges
incident with $y$ or $v_1$. Then $|F|=3$ and $y,v_1$ are isolated
vertices in $G-F$ and hence $\gamma_R(G-F)\ge 4$. Thus
$n-2=b_R(G)\le 3$, a contradiction.

\smallskip
{\bf Subcase 2.2}\quad $\delta(G)=2$.\\ Then we must have
$\deg(v_1)=\deg(v_2)=2$ and  $v_1u_2,v_2u_1\in E(G)$. Let
$F=\{yu_1,zu_2\}$. Clearly $\Delta(G-F)=3=n-3$ and it follows from
Proposition \ref{propG} that $\gamma_R(G-F)\ge 4$. Hence $b_R(G)\le
2$, which is a contradiction.

This completes the proof.
\end{proof}

\begin{prelem}\label{propJ}
The complete graph $K_{2r}$ is 1-factorable.
\end{prelem}

According to Theorem \ref{thm11}, Theorem \ref{thm12}, Proposition
\ref{propG} and Proposition \ref{propJ}, we prove the next result.

\begin{theorem}\label{thm13}
Let $G$ be a connected graph of order $n\ge 4$ with
$\gamma_R(G)=3$. Then $b_{R}(G)=\Delta(G)= n-2$ if and only if
$G\simeq C_4$.
\end{theorem}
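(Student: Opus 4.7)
The ``if'' direction is immediate: $C_4$ has $n=4$ and $\Delta(C_4)=2=n-2$, while Lemma~\ref{propH} gives $b_R(C_4)=2$ since $4\equiv 1\pmod 3$.

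For the ``only if'' direction, the plan is to combine the three preparatory results above to pin down the structure of $G$ completely. Suppose $b_R(G)=\Delta(G)=n-2$. Theorem~\ref{thm12} forces $G$ to be regular, and since $\Delta(G)=n-2$ this means $G$ is $(n-2)$-regular. Equivalently, $\overline{G}$ is a $1$-regular graph on $n$ vertices, i.e.\ a perfect matching, so $n$ is even and $G\cong K_n-M$ for some perfect matching $M$ of $K_n$. In the base case $n=4$, this graph is exactly $C_4$, which yields the conclusion.

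For $n\ge 6$ I plan to derive a contradiction by exhibiting a Roman bondage set of size strictly less than $n-2$, via a $1$-factorization of $K_n$. By Proposition~\ref{propJ}, $K_n$ admits a $1$-factorization into perfect matchings $M_1,M_2,\ldots,M_{n-1}$; take $M=M_1$ and set $B=M_2\subseteq E(G)$. Removing $B$ drops the regularity by one, so $\Delta(G-B)=n-3$. By the contrapositive of Proposition~\ref{propG} this forces $\gamma_R(G-B)\ne 3$, and by Observation~\ref{ob1} it also rules out $\gamma_R(G-B)=2$ (since $n-3<n-1$); hence $\gamma_R(G-B)\ge 4>\gamma_R(G)$. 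Therefore $B$ is a Roman bondage set, giving $b_R(G)\le |B|=n/2<n-2$ when $n\ge 6$, the desired contradiction.

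The argument is short because Theorems~\ref{thm11} and \ref{thm12} have already done the heavy lifting by reducing the extremal case to a regular graph; the only genuinely new ingredient is ruling out the cocktail-party graphs $K_n-M$ with $n\ge 6$, which the $1$-factorization gambit handles in one stroke. The most delicate (but still routine) step is passing from $\gamma_R(G-B)\ne 3$ to $\gamma_R(G-B)\ge 4$, for which Observation~\ref{ob1} must be invoked alongside Proposition~\ref{propG}.
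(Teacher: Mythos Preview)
Your proof is correct and follows essentially the same route as the paper: reduce to $(n-2)$-regularity via Theorem~\ref{thm12}, recognize $G\cong K_n-M$ with $n$ even, then use a $1$-factor of $G$ (obtained from a $1$-factorization of $K_n$ with $M$ as one factor) to drop the maximum degree to $n-3$ and force $\gamma_R\ge 4$, giving $b_R(G)\le n/2$ and hence $n=4$. The only cosmetic difference is that the paper phrases the final step as the single inequality $n-2=b_R(G)\le n/2$ rather than splitting off $n=4$ as a base case, and it uses monotonicity $\gamma_R(G-B)\ge\gamma_R(G)=3$ (together with $\ne 3$) instead of invoking Observation~\ref{ob1} to rule out $\gamma_R(G-B)=2$; your way is equally valid.
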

\begin{proof}
Let $G$ be a connected graph of order $n\ge 4$ with $\gamma_R(G)=3$.
It follows from Theorem \ref{thm11} that $b_{R}(G)\le n-2$.

If $G\simeq C_4$, then obviously $b_R(G)=2=n-2$.

Conversely, assume that $b_R(G)=n-2$.  It follows from Proposition
\ref{propG} and Theorem \ref{thm12} that $G$ is $(n-2)$-regular.
This implies that $n$ is even and hence $G=K_n-M$ where $M$ is a
perfect matching in $K_n$. By Proposition \ref{propJ}, $G$ is
1-factorable. Let $M_1$ be a perfect matching in $G$. Now $G-M_1$ is
an $(n-3)$-regular and it follows from Proposition \ref{propG} that
$\gamma_R(G-M_1)\ge 4$. Thus $n-2=b_R(G)\le \frac{n}{2}$ which
implies that $n=4$ and hence $G=C_4$.
\end{proof}

\begin{theorem}\label{thm14}
If $G$ is a connected graph of order $n\ge 4$ with
$\gamma_R(G)=4$, then
$$b_{R}(G)\le \Delta(G)+\delta(G)-1.$$
\end{theorem}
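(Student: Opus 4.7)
The plan is to exhibit an explicit Roman bondage set of size at most $\Delta(G)+\delta(G)-1$. Proposition~\ref{propA} already yields $\Delta(G)\le n-3$, and a quick inspection of the connected graphs of order $4$ rules out $n=4$, so I may assume $n\ge 5$. Pick a vertex $v$ of minimum degree $\delta$ and split on whether $\gamma_{\rm R}(G-v)\ge 4$.

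If $\gamma_{\rm R}(G-v)\ge 4$, the obvious choice $B=E_G(v)$ isolates $v$ in $G-B$, whence $\gamma_{\rm R}(G-B)=1+\gamma_{\rm R}(G-v)\ge 5$ and $|B|=\delta\le\Delta+\delta-1$.

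Assume therefore $\gamma_{\rm R}(G-v)\le 3$. Since $n\ge 5$, the value $\gamma_{\rm R}(G-v)=2$ would force a universal vertex in $G-v$ (giving $\Delta(G-v)=n-2$), contradicting $\Delta(G-v)\le\Delta(G)\le n-3$; so $\gamma_{\rm R}(G-v)=3$ and Lemma~\ref{propG} gives $\Delta(G-v)=n-3$. Let $M:=\{x\in V(G-v):\deg_{G-v}(x)=n-3\}$. Each $x\in M$ must be non-adjacent to $v$ in $G$ (otherwise $\deg_G(x)\ge n-2>\Delta$), so $\deg_G(x)=\Delta=n-3$ and $x$ has exactly two non-neighbours in $G$, namely $v$ and some $w_x$. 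I now take $B:=E_G(v)\cup F$, where $F\subseteq E(G-v)$ will be chosen to meet every vertex of $M$. Removing $F$ from $G-v$ drops every $M$-degree below $n-3$, while vertices outside $M$ already satisfy $\deg_{G-v}\le n-4$; hence $\Delta((G-v)-F)\le n-4<(n-1)-2$, and Lemma~\ref{propG} combined with $\gamma_{\rm R}((G-v)-F)\ge \gamma_{\rm R}(G-v)=3$ gives $\gamma_{\rm R}((G-v)-F)\ge 4$. Since $v$ is isolated in $G-B$, we conclude $\gamma_{\rm R}(G-B)=1+\gamma_{\rm R}((G-v)-F)\ge 5$, so $B$ is a Roman bondage set.

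The remaining task, and the main technical point, is producing such an $F$ with $|F|\le\Delta-1$. The key structural observation is that for distinct $x,y\in M$ one has $xy\notin E(G)$ iff $y=w_x$ (equivalently $x=w_y$), so the induced subgraph $G[M]$ of $G-v$ is $K_{|M|}$ minus a matching. When $|M|\ge 3$ this graph has no isolated vertices, and Gallai's identity yields an edge cover of $M$ inside $G[M]$ of size $\lceil|M|/2\rceil$; when $|M|=2$ I take the edge $xy$ (cost $1$) if it exists, or else one edge at each of $x,y$ (cost $2$); $|M|=1$ is trivial. Combined with $|M|\le n-1-\delta$ and $\Delta=n-3$, the bound $|F|\le\Delta-1$ follows routinely once $n\ge 6$. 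The delicate part is $n=5$, where the two potentially bad configurations ($|M|\ge 3$, and $|M|=2$ with $x\not\sim y$) must be excluded directly: a short check shows that each of them forces some vertex of $N(v)$ to have degree exceeding $\Delta=2$, which is incompatible with $\gamma_{\rm R}(G)=4$ on a connected graph.
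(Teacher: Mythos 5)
Your proposal is correct, but it follows a genuinely different route from the paper after the common first step. Both arguments start identically: take a minimum-degree vertex $v$ and observe that either deleting $E_G(v)$ already raises the Roman domination number (giving a bondage set of size $\delta$), or else $\gamma_{\rm R}(G-v)=3$. From there the paper is much shorter: it applies Theorem~\ref{thm11} to get $b_{\rm R}(G-v)\le\Delta(G-v)$, uses the characterization in Theorem~\ref{thm13} to rule out equality (since $G-v=C_4$ would force $\gamma_{\rm R}(G)=3$), and concludes via $b_{\rm R}(G)\le b_{\rm R}(G-v)+\deg(v)\le\Delta(G-v)-1+\delta\le\Delta(G)+\delta(G)-1$. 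You instead re-derive the needed fact about $G-v$ from scratch: using Lemma~\ref{propG} ($\gamma_{\rm R}=3\iff\Delta=n-2$) you identify the set $M$ of degree-$(n-3)$ vertices of $G-v$, show $G[M]$ is a complete graph minus a matching, and build an explicit bondage set $E_G(v)\cup F$ where $F$ is a Gallai-type edge cover of $M$ of size at most $\lceil|M|/2\rceil\le\Delta-1$, with a separate hand check at $n=5$ (where $G$ is forced to be $P_5$ or $C_5$, so the bad configurations $|M|\ge3$ or $|M|=2$ nonadjacent indeed cannot occur — your asserted ``short check'' does go through). What each buys: the paper's proof is economical because it recycles Theorems~\ref{thm11} and \ref{thm13}, but note those theorems are stated for connected graphs and $G-v$ need not be connected, so your direct construction — which never needs connectivity of $G-v$ and in fact quietly repairs that point — is more self-contained and robust, at the cost of the edge-cover bookkeeping and the $n=5$ case analysis; it also produces the bondage set explicitly rather than through the equality characterization $C_4$.
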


\begin{proof}
Obviously $\Delta(G)\ge 2$. Let $u$ be a vertex of minimum degree
$\delta(G)$.  If $b_R(G)\le \deg(u)$, then we are done.  Suppose
$b_{R}(G)> \deg(u)$. Then $\gamma_R(G-u)=\gamma_R(G)-1=3$. By
Theorem \ref{thm11}, $b_R(G-u)\le \Delta(G-u)$. If $b_R(G-u)=
\Delta(G-u)$, then $G-u=C_4$ by Theorem \ref{thm13} and since $G$ is
connected, we deduce that $\gamma_R(G)=3$, a contradiction. Thus
$b_R(G-u)\le \Delta(G-u)-1$. It follows from Observation \ref{ob2}
that \begin{equation}\label{eq3}b_R(G)\le b_R(G-u)+\deg(u)\le
\Delta(G-u)-1+\deg(u) \le \Delta(G)+\delta(G)-1,\end{equation} as
desired. This completes the proof.
\end{proof}

Dehgardi et al. \cite{DSV} proved that  for any connected graph
$G$ of order $n\ge 3$, $b_{\rm R}(G)\le n-1$  and posed the
following problems.

\smallskip
\noindent {\bf Problem 1.}\label{p1} {\em Prove or disprove: For any
connected graph $G$ of order $n\ge 3$, $b_{\rm R}(G)=n-1$ if and
only if $G\cong K_3$.}

\smallskip
\noindent {\bf Problem 2.} {\em Prove or disprove: If $G$ is a
connected graph of order $n\ge 3$, then
$$b_{\rm R}(G)\le n-\gamma_{\rm R}(G)+1.$$}

Since $\gamma_R(K_{3,3,\ldots,3})=4$, Proposition \ref{propF} shows
that Problems 1 and 2 are false. Recently Akbari and Qajar \cite{AQ}
proved that:

\begin{prelem}
If $G$ is a connected graph of order $n\ge 3$, then
$$b_{R}(G)\le n-\gamma_R(G)+5.$$
\end{prelem}

We conclude this paper with the following revised problems.

\smallskip
\noindent {\bf Problem 3.} {\em Characterize all connected graphs
$G$ of order $n\ge 3$ for which $b_R(G)=n-1$.}\vspace{2mm}

\smallskip
\noindent {\bf Problem 4.}  {\em  Prove or disprove: If $G$ is a
connected graph of order $n\ge 3$, then
$$b_{R}(G)\le n-\gamma_R(G)+3.$$}

{}


\begin{thebibliography}{9}
\bibitem{AQ}
S. Akbari and S. Qajar, A note on Roman bondage
number of graphs. {\it  Ars Combin.}, (to appear).

\bibitem{bcd88}
R.C. Brigham, P.Z. Chinn and R.D. Dutton, Vertex
domination-cirtical graphs. {\it Networks}, {\bf 18} (1988),
173-179.

\bibitem{ck10}
E.W. Chambers, B. Kinnersley, N. Prince, D.B. West, Extremal
problems for Roman domination. {\it SIAM J. Discrete Math.}, {\bf
23} (2009), 1575-1586.

\bibitem{cd04}
E.J. Cockayne, P.A. Dreyer Jr., S.M. Hedetniemi, S.T. Hedetniemi,
Roman domination in graphs. {\it Discrete Math.}, {\bf 278} (1-3)
(2004), 11-22.

\bibitem {CFM} E.J. Cockayne, O. Favaron and C.M. Mynhardt,
Secure domination, weak Roman domination and forbidden subgraphs.
{\it  Bull. Inst. Combin. Appl.}, {\bf 39} (2003), 87-100.

\bibitem {DKSV} N. Dehgardi, O. Favaron, B. Kheirfam and S.M. Sheikholeslami,
Roman fractional bondage number of a graph. {\it J. Combin. Math.
Combin. Comput.}, (to appear).


\bibitem {DSV} N. Dehgardi, S.M. Sheikholeslami and L. Volkman,
On the Roman $k$-bondage number of a graph. {\it AKCE Int. J.
Graphs Comb.},
 {\bf 8} (2011), 169-180.

\bibitem{EP} K. Ebadi and L. PushpaLatha, Roman bondage and Roman
reinforcement numbers of a graph. {\it Int. J. Contemp. Math.
Sci.}, {\bf 5} (2010), 1487-1497.

\bibitem {FKS} O. Favaron, H. Karami and S.M. Sheikholeslami, On the
Roman domination number in graphs. {\it Discrete Math.}, {\bf 309}
(2009), 3447-3451.

\bibitem{fjkr90}
J.F. Fink, M.S. Jacobson, L.F. Kinch and J. Roberts, The bondage
number of a graph. {\it Discrete Math.}, {\bf 86}  (1990), 47-57.


\bibitem{fy09}
X.L. Fu, Y.S. Yang and B.Q. Jiang, Roman domination in regular
graphs. {\it Discrete Math.}, {\bf 309}  (2009), 1528-1537.

\bibitem{gj79}
M.R. Garey, D.S. Johnson, {\it Computers and Intractability: A
Guide to the Theory of NP-Completeness}. Freeman, San Francisco,
1979.

\bibitem{hh197}
T.W. Haynes, S.T. Hedetniemi, P.J. Slater, {\it Fundamentals of
Domination in Graphs}. Marcel Dekker, New York, 1998.

\bibitem{hh297}
T.W. Haynes, S.T. Hedetniemi, P.J. Slater, {\it Domination in
Graphs: Advanced Topics}. Marcel Dekker, New York, 1998.

\bibitem{hp08}
J.H. Hattingh, A.R. Plummer, Restrained bondage in graphs. {\it
Discrete Math.}, {\bf 308} (2008), 5446-5453.

\bibitem {H} M.A. Henning, A characterization of Roman trees.
{\it Discuss. Math. Graph Theory}, {\bf 22} (2002), 225--234.

\bibitem {H1} M.A. Henning, Defending the Roman Empire from
multiple attacks. {\it Discrete Math.}, {\bf 271} (2003), 101-115.

\bibitem {HS} M.A. Henning and S.T. Hedetniemi, Defending the Roman Empire-A new strategy.
{\it Discrete Math.}, {\bf 266} (2003), 239-51.

\bibitem{hx12}
F.-T. Hu and J.-M. Xu, On the complexity of the bondage and
reinforcement problems. {\it J. Complexity}, {\bf 28} (2) (2012),
192-201.

\bibitem {propF} F.-T. Hu and J.-M. Xu, Roman bondage numbers of
some graphs. (Submitted).

\bibitem{lk08}
M. Liedloff, T. Kloks, J. P. Liu and S. L. Peng, Efficient
algorithms for Roman domination on some classes of graphs. {\it
Discrete Appl. Math.}, {\bf 156} (2008), 3400-3415.

\bibitem{lkl05} M. Liedloff, T. Kloks, J. P. Liu and S.
L. Peng, Roman domination over some graph classes. Graph-Theoretic
Concepts in Computer Science, {\it Lecture Notes in Comput. Sci.},
{\bf 3787} (2005), 103-114.

\bibitem {RV1} N.J. Rad and L. Volkmann, Roman bondage in graphs.
{\it Discuss. Math. Graph Theory}, {\bf 31} (2011), 763-773.

\bibitem {RV2} N.J. Rad and L. Volkmann, On the Roman bondage number of planar  graphs.
{\it Graphs Combin.}, {\bf 27} (2011), 531-538.

\bibitem {rr} C.S. ReVelle and  K.E. Rosing, Defendens imperium romanum: a
classical problem in military strategy. {\it Amer. Math. Monthly}
{\bf 107} (2000), 585--594.

\bibitem {s} I. Stewart, Defend the Roman Empire. {\it Sci. Amer.} {\bf 281}
(1999), 136-39.

\bibitem{pp02}
A. Pagourtzis, P. Penna, K. Schlude, K. Steinhofel, D. Taylor and
P. Widmayer, Server placements, Roman domination and other
dominating set variants. {\it IFIP TCS Conference Proceedings},
(2002), 280-291.

\bibitem{rs07}
R.R. Rubalcaba, P.J. Slater, Roman dominating influence
parameters. {\it Discrete Math.}, {\bf 307} (2007), 3194-3200.

\bibitem{sh07}
W.P. Shang and X.D. Hu, The roman domination problem in unit disk
graphs. {\it Lecture Notes in Comput. Sci.}, {\bf 4489/2007} (2007),
 305-312.

\bibitem{sh10}
W.P. Shang and X.D. Hu, Roman domination and its variants in unit
disk graphs. {\it  Discrete Math. Algorithms Appl.}, {\bf 2}
(2010), 99-105.

\bibitem{v94}
L. Volkmann, On graphs with equal domination and covering numbers.
{\it Discrete Appl. Math.}, {\bf 51} (1994), 211-217.

\bibitem {W} D.B. West, {\rm Introduction to Graph Theory}.
Prentice-Hall, 2000.

\bibitem{xc06}
H.M. Xing, X. Chen and X.G. Chen, A note on Roman domination in
graphs. {\it Discrete Math.}, {\bf 306}  (2006), 3338-3340.

\bibitem{x01}
J.-M. Xu, {\it Toplogical Structure and Analysis of
Interconnection Networks}. Kluwer Academic Publishers,
Dordrecht/Boston/London, 2001.

\bibitem{x03}
J.-M. Xu, {\it Theory and Application of Graphs}. Kluwer Academic
Publishers, Dordrecht/Boston/London, 2003.
\end{thebibliography}
\end{document}